\theoremstyle{plain} \numberwithin{equation}{section}
\newtheorem{main}{Theorem}
\newtheorem{thm}{Theorem}[section]
\newtheorem{cor}[thm]{Corollary}
\newtheorem{prop}[thm]{Proposition}
\newtheorem{lemma}[thm]{Lemma}
\theoremstyle{definition}
\newtheorem{remark}{Remark}[section]
\newtheorem{defn}[remark]{Definition}
\newtheorem{rmk}[thm]{Remark}
\newcommand{\bi}{\begin{itemize}}
\newcommand{\ei}{\end{itemize}}
\newcommand{\bp}{\begin{proof}}
\newcommand{\ep}{\end{proof}}
\DeclareMathOperator{\codim}{codim}
\def\dim{\mbox{dim}}
\def\ra{\rightarrow}
\def\cal{\mathcal}
\def\PP{\mathbb{P}}
\def\QQ{\mathbb{Q}}
\def\RR{\mathbb{R}}
\def\OO{\cal O}
\def\s-{\setminus}
\def\cpb{\overline{\mathbb C \mathbb P ^2}}
\begin{document}

\title{Balanced metrics on uniruled manifolds}

\author[Chiose]{Ionu\c{t} Chiose}

\address{
	Institute of Mathematics of the Romanian Academy,  P.O. Box 1-764, Bucharest 014700,  Romania}
	
	\email{Ionut.Chiose@imar.ro}

\author[R\u asdeaconu]{Rare\c s R\u asdeaconu}

\address{        
        Department of Mathematics, 1326 Stevenson Center, Vanderbilt University, Nashville, TN, 37240, USA}
        
        \email{rares.rasdeaconu@vanderbilt.edu}

\author[\c Suvaina]{Ioana \c Suvaina}

\address{        
        Department of Mathematics, 1326 Stevenson Center, Vanderbilt University, Nashville, TN, 37240, USA}

\email{ioana.suvaina@vanderbilt.edu}

\date{\today}

\keywords{Complex manifolds, uniruledness, balanced metrics, total scalar Chern curvature}

\subjclass[2000]{Primary: 53C55, 32Q10; Secondary: 32J18, 14E30, 14M99.}

\begin{abstract}
We show that an $n-$dimensional Moishezon manifold 
is uniruled if and only if it supports a balanced metric 
$\omega^{n-1}$ of positive total scalar Chern curvature. 
A similar statement also holds true for class $\cal C$ 
manifolds of dimension three. 
\end{abstract}

\maketitle

\thispagestyle{empty}

\tableofcontents

\section*{Introduction}

A compact complex manifold $M$ is called uniruled 
if there exists a rational curve passing through every 
point of $M.$  A differential geometric characterization 
of uniruledness in complex dimension two was given
by Yau \cite{yau}. He proved that a K\"ahler surface $S$ 
has Kodaira dimension $-\infty$ (equivalently, uniruled) 
if and only if it admits a K\"ahler metric $\omega$ of 
positive total scalar curvature. This is equivalent to
\begin{equation}
\label{cond-surfaces}
\int_S c_1(K_S)\wedge \omega<0,
\end{equation}
where $K_S$ denotes the canonical line bundle of $S.$ 

\medskip

The aim of this article is to extend Yau's differential 
geometric characterization in higher dimensions. 
In one direction, the existence of a K\"ahler metric of positive total 
scalar curvature on projective uniruled manifolds has been 
recently discussed by Heier and Wong \cite[Section 5]{heier-wong}, 
but a definite conclusion is elusive. Such metrics are known to exist 
on some uniruled manifolds. Most notably, they exist on projective 
Mori fiber spaces of dimension three, as established by Demailly, 
Peternell and Schneider \cite[Proposition 4.9]{dps}.  An approach to 
this existence question, which indicates that in general the answer 
is negative, is proposed by the second author  in the case of rationally 
connected threefolds \cite{ratcon}. This suggests that  instead of 
searching for K\"ahler metrics of positive total scalar curvature on 
uniruled manifolds, one should broaden the search to a larger class 
of metrics. To detect a suitable such   class of Hermitian metrics we 
follow Yau's original proof \cite{yau}.
Yau's approach to find K\"ahler metrics of positive total scalar 
curvature on uniruled surfaces relies on the minimal model theory. 
His proof follows in two steps: 
\begin{itemize}
\item[ A)] Bimeromorphic invariance: Yau shows that the existence 
of such metrics is an invariant property under bimeromorphic maps. 
In the case of surfaces, the invariance under blow-ups suffices.

\item[ B)] Existence of a K\"ahler metric of positive 
total scalar Chern curvature on an exhaustive list of  
bimeromorphism classes of uniruled surfaces: Yau 
proved the existence of K\"ahler metrics satisfying 
(\ref{cond-surfaces}) on all geometrically ruled surfaces. 
\end{itemize}

\medskip

To extend Step A in higher dimensions, recall that any 
bimeromorphic map decomposes by the weak factorization 
theorem \cite[Theorem 0.3.1]{wft} into a sequence of blow-ups 
and blow-downs with smooth centers. A well-known fact is that, 
unlike uniruledness,  
the class of K\"ahler manifolds of dimension greater than or equal 
to three is not closed under bimeromorphisms. We are led to 
consider a larger class of manifolds which is invariant under 
bimeromorphisms. From the work of Alessandrini and 
Bassanelli \cite{alessandrini3, alessandrini2}, it is known 
that the class of manifolds carrying balanced metrics, i.e., 
Hermitian metrics with co-closed K\"ahler form (see 
\cite{michelsohn} and Section \ref{metrics-intro}), satisfies 
this property. In dimension two, any balanced metric 
is in fact K\"ahler, but in higher dimensions there exist 
non-K\"ahler manifolds which admit balanced metrics 
or K\"ahler manifolds which admit non-K\"ahler balanced 
metrics. We prove:

\begin{main}
\label{bi-invariance}
Let $X$ and $Y$ be two bimeromorphic compact complex 
manifolds of dimension $n$. If there exists a balanced 
metric $\omega^{n-1}_X$ on $X$ such that 
$$
\int_X c_1(K_X)\wedge\omega_X^{n-1}<0,
$$
then there exists 
a balanced metric $\omega_Y^{n-1}$  on $Y$ such that 
$$
\int_Yc_1(K_Y)\wedge \omega_Y^{n-1}<0.
$$
\end{main}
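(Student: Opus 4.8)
The plan is to follow the outline of Step~A indicated above. First I would invoke the weak factorization theorem \cite[Theorem 0.3.1]{wft} to reduce to a single blow-up: if $X\dashrightarrow Y$ factors through a chain of manifolds in which consecutive terms differ by a blow-up along a smooth centre, it suffices to show that the stated property is preserved under \emph{each} such move, in both directions. So I fix the blow-up $\pi\colon\wh X\ra X$ of a smooth submanifold $Z\subset X$ of codimension $k\ge 2$, with exceptional divisor $E=\PP(N_{Z/X})$, and must prove: (i) if $X$ carries a balanced metric $\omega_X^{n-1}$ with $\int_X c_1(K_X)\wedge\omega_X^{n-1}<0$, so does $\wh X$; and (ii) conversely, if $\wh X$ does, so does $X$. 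Throughout I use the adjunction identity $K_{\wh X}=\pi^*K_X\otimes\OO_{\wh X}((k-1)E)$, i.e. $c_1(K_{\wh X})=\pi^*c_1(K_X)+(k-1)[E]$, where $[E]$ is represented by a smooth closed $(1,1)$-form (a Chern form of a Hermitian metric on $\OO_{\wh X}(E)$), together with the result of Alessandrini and Bassanelli \cite{alessandrini3,alessandrini2} that $\wh X$ is balanced if and only if $X$ is.

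For direction (i) the key observation is that the closed positive $(n-1,n-1)$-forms form a convex cone, so I can average. Fix any balanced metric $\eta$ on $\wh X$ (it exists by the invariance just recalled) and, for $t\in(0,1]$, put $\Omega_t=(1-t)\pi^*(\omega_X^{n-1})+t\,\eta^{n-1}$. This is closed (a sum of closed forms) and strictly positive (dominated below by $t\,\eta^{n-1}$), hence equal to $\wh\omega_t^{\,n-1}$ for a unique Hermitian metric $\wh\omega_t$, which is then balanced since $d\wh\omega_t^{\,n-1}=d\Omega_t=0$. To compute $\int_{\wh X}c_1(K_{\wh X})\wedge\Omega_t$ I expand it along $c_1(K_{\wh X})=\pi^*c_1(K_X)+(k-1)[E]$: the term $\int_{\wh X}\pi^*c_1(K_X)\wedge\pi^*(\omega_X^{n-1})$ equals $\int_X c_1(K_X)\wedge\omega_X^{n-1}$, while $\int_{\wh X}[E]\wedge\pi^*(\omega_X^{n-1})=\int_E(\pi|_E)^*\big((\omega_X|_Z)^{n-1}\big)$ vanishes because $(\omega_X|_Z)^{n-1}$ is an $(n-1,n-1)$-form on the $(n-k)$-dimensional manifold $Z$, hence identically zero. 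Thus $\int_{\wh X}c_1(K_{\wh X})\wedge\wh\omega_t^{\,n-1}=(1-t)\int_X c_1(K_X)\wedge\omega_X^{n-1}+t\,C$ with $C$ a fixed constant, which is negative for $t$ small; this settles (i).

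For direction (ii) I would push forward. Given a balanced metric $\wh\omega$ on $\wh X$ with $\int_{\wh X}c_1(K_{\wh X})\wedge\wh\omega^{\,n-1}<0$, the current $T:=\pi_*(\wh\omega^{\,n-1})$ is a positive $d$-closed $(n-1,n-1)$-current on $X$, smooth away from $Z$; by the projection formula and adjunction,
$$
\int_X c_1(K_X)\wedge T=\int_{\wh X}c_1(K_{\wh X})\wedge\wh\omega^{\,n-1}-(k-1)\int_E\wh\omega^{\,n-1}<0 ,
$$
since $\int_E\wh\omega^{\,n-1}>0$ (the form restricts to a volume form on the $(n-1)$-dimensional $E$). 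So one has a positive $d$-closed current on $X$, with a definite amount of slack, that pairs negatively with $c_1(K_X)$; it remains to promote it to an honest balanced metric. Since $X$ is balanced, the balanced cone $\mathcal B_X\subset H^{n-1,n-1}_{BC}(X,\RR)$ — the classes carrying a strictly positive $d$-closed $(n-1,n-1)$-form representative — is a nonempty open convex cone (openness: a small Bott--Chern class has a small closed smooth representative, by the Hodge theory of the Bott--Chern Laplacian, and this may be added to a strictly positive closed form without spoiling positivity). Because $c_1(K_X)$ defines a continuous linear functional on $H^{n-1,n-1}_{BC}(X,\RR)$ taking the negative value $\int_X c_1(K_X)\wedge T$ at $\{T\}_{BC}$, it suffices to prove that $\{T\}_{BC}\in\overline{\mathcal B_X}$: then a nearby class in $\mathcal B_X$ still pairs negatively with $c_1(K_X)$, and the corresponding balanced metric is the required $\omega_Y^{n-1}$.

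The crux, and the step I expect to be the hard one, is precisely this regularization: showing $\{T\}_{BC}\in\overline{\mathcal B_X}$, i.e. smoothing the positive $d$-closed current $T$ — singular only along the smooth codimension-$\ge2$ set $Z$, and carrying the extra positivity $(k-1)\int_E\wh\omega^{\,n-1}$ to spend — into a balanced metric approximating its Bott--Chern class. The approach I would take is to fix a balanced metric $\sigma$ on $X$ and show $\{T\}_{BC}+\varepsilon\{\sigma^{n-1}\}_{BC}\in\mathcal B_X$ for every $\varepsilon>0$, by regularizing $T$ across $Z$ inside its Bott--Chern class up to an error pointwise bounded by $\varepsilon\,\sigma^{n-1}$; this is where the current-theoretic techniques of Alessandrini--Bassanelli for modifications, resting on Michelsohn's duality characterization of balanced manifolds \cite{michelsohn}, must be brought in. Letting $\varepsilon\to0$ then gives $\{T\}_{BC}\in\overline{\mathcal B_X}$ and completes the proof.
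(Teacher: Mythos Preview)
Your forward direction (i) is essentially the paper's argument: they too form $\pi^*\omega_X^{n-1}+\varepsilon\,\eta^{n-1}$ for small $\varepsilon$ and use $\int_{\wh X}c_1(K_{\wh X})\wedge\pi^*\omega_X^{n-1}=\int_Xc_1(K_X)\wedge\omega_X^{n-1}$; your explicit check that the $[E]$--term vanishes is a detail they leave tacit, but the idea is identical.

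Your backward direction (ii), however, takes a different route, and the step you flag as the hard one --- showing $[T]=[\pi_*(\wh\omega^{\,n-1})]\in\overline{\cal B_X}$ by regularizing a closed positive $(n-1,n-1)$-current across its singular locus --- is precisely what the paper avoids. Instead of pushing a current \emph{down} and smoothing it, the paper argues by contradiction and lifts a current \emph{up}: if no balanced metric on $X$ pairs negatively with $c_1(K_X)$, then $c_1(K_X)$ is nonnegative on all of $\cal N_{X,BC}^{n-1}=\overline{\cal B_X}$, and the duality $\cal N_{X,BC}^{n-1}=(\cal E_{X,A}^1)^*$ (Theorem~\ref{duality}\,iv), valid since $X$ is balanced) yields a positive $i\partial\bar\partial$-closed $(1,1)$-current in the Aeppli class $\{c_1(K_X)\}$. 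Alessandrini--Bassanelli's total transform \cite{alessandrini2} produces a positive $i\partial\bar\partial$-closed current on $\wh X$ in $\{c_1(\pi^*K_X)\}$, so $\{c_1(K_{\wh X})\}=\{c_1(\pi^*K_X)\}+(k-1)\{[E]\}\in\cal E_{\wh X,A}^1$, contradicting $\int_{\wh X}c_1(K_{\wh X})\wedge\wh\omega^{\,n-1}<0$.

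The paper's route thus uses only the Hahn--Banach duality already set up in Section~\ref{cones} and the existence of total transforms of positive pluriharmonic $(1,1)$-currents, an established result; no regularization in bidegree $(n-1,n-1)$ is needed. Your route requires proving that the push-forward of a balanced metric under a blow-down lies in $\overline{\cal B_X}$; this is plausible (the singularity sits in codimension $\ge 2$ and you have the extra mass $(k-1)\int_E\wh\omega^{\,n-1}$ to spend), but regularization of closed positive $(n-1,n-1)$-currents inside their Bott--Chern class is genuinely more delicate than the $(1,1)$ case, and you have not carried it out. As written your argument has a gap there; the paper's dual argument closes it cleanly.
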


Demailly, Peternell and Schneider also asked if Step A can be 
accomplished for normal projective varieties \cite[Problem 4.12]{dps}. 
Theorem \ref{bi-invariance} gives a  partial answer to their question.

\medskip

An extension of Step B to uniruled manifolds of higher dimensions 
relies on the state of the art of the minimal model program. For 
projective uniruled  manifolds one can find a bimeromorphic simpler 
model in any dimension \cite{bchm}. These bimeromorphic models 
are higher dimensional analogs of the geometrically ruled surfaces, 
called Mori fiber spaces (see Section \ref{mfs}). 
We show that every Mori fiber space admits K\"ahler metrics of 
positive total scalar curvature, and we obtain:

\begin{main}
\label{thmA-proj}
Every $n$-dimensional, Moishezon, uniruled manifold $X$  admits a 
balanced metric $\omega^{n-1}$ such that 
$$
\int_X c_1(K_X)\wedge \omega^{n-1}<0.
$$
\end{main}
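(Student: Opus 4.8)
The plan is to follow Yau's two-step strategy, now that Step A is secured by Theorem~\ref{bi-invariance}. So by that bimeromorphic invariance, it suffices to produce, for each $n$-dimensional Moishezon uniruled $X$, \emph{some} bimeromorphic model $X'$ carrying a balanced metric $\omega_{X'}^{n-1}$ with $\int_{X'}c_1(K_{X'})\wedge\omega_{X'}^{n-1}<0$; transporting back along a bimeromorphic map then gives the desired metric on $X$.

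\medskip

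First I would reduce to the projective case. A Moishezon manifold is bimeromorphic to a projective manifold (Moishezon's theorem together with Hironaka), so by Theorem~\ref{bi-invariance} we may assume $X$ itself is smooth projective, and still uniruled. Next, invoke the minimal model program for projective uniruled varieties: by \cite{bchm} (and the argument that a uniruled variety has a Mori fiber space model), $X$ is connected by a sequence of divisorial contractions and flips — more precisely, birational to — a projective variety $X'$ admitting a Mori fiber space structure $f\colon X'\to B$ with $\dim B<\dim X'$, relative Picard rank one, and $-K_{X'}$ $f$-ample. There is a technical wrinkle: the output of the MMP is in general only $\QQ$-factorial terminal, not smooth; here one needs to either run the argument on a smooth model obtained by resolving $X'$ (and then using \ref{bi-invariance} once more) or to check that the construction below descends/ascends appropriately. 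I would pass to a resolution $\widetilde{X'}\to X'$ so everything happens on smooth manifolds.

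\medskip

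The heart of Step B is then the construction of the balanced metric with negative total scalar Chern curvature on the (resolved) Mori fiber space. The idea: the base $B$ is again projective uniruled of smaller dimension, so by induction on dimension it carries a balanced metric $\eta^{\dim B-1}$ with $\int_B c_1(K_B)\wedge\eta^{\dim B-1}<0$ (the base case being Yau's theorem for surfaces, or the trivial case $\dim B=0$). Pull this back by $f$ and combine it with a metric built from the relative anticanonical positivity: since $-K_{X'/B}$ is $f$-ample, there is a closed real $(1,1)$-form $\alpha$ that is positive on the fibers and represents (a multiple of) $c_1(-K_{X'/B})$. One forms a balanced representative of the shape $\omega^{n-1}=\big(f^*\eta + t\,\alpha + (\text{correction})\big)^{n-1}$, or more robustly works directly at the level of $(n-1,n-1)$-forms: take $\omega^{n-1}=f^*(\eta^{\dim B-1})\wedge \beta^{r-1}+ \dots$ where $r$ is the fiber dimension and $\beta$ is fiberwise positive, then verify co-closedness. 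The adjunction relation $K_{X'}=f^*K_B\otimes K_{X'/B}$ lets us split $\int_{X'}c_1(K_{X'})\wedge\omega^{n-1}$ into a term coming from $c_1(K_B)$ pulled back (negative by the inductive hypothesis, integrating over fibers) and a term coming from $c_1(K_{X'/B})$ (negative because fibers are Fano-type, so $-K_{X'/B}$ is fiberwise positive against the fiberwise positive part of $\omega^{n-1}$); a suitable choice of the scaling parameter $t$ makes the total negative while keeping the form balanced and positive.

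\medskip

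I expect the main obstacle to be exactly this gluing: producing an \emph{honestly balanced} (not merely positive) metric on the total space whose total scalar Chern curvature one can control, given that balancedness is a genuine PDE-type constraint ($d(\omega^{n-1})=0$) rather than a pointwise positivity condition, and given that Mori fiber spaces can have singular fibers and that the resolution $\widetilde{X'}\to X'$ may contract divisors over the bad locus. Managing the singular fibers — ensuring the fiberwise-positive form extends across them and that the contribution of exceptional divisors to the integral does not spoil the sign — is the delicate point; I anticipate using that balancedness of $\omega^{n-1}$ only requires closedness of the $(n-1,n-1)$-form, which is more flexible than the Kähler condition, together with a limiting/perturbation argument and the openness of the strict inequality $\int_X c_1(K_X)\wedge\omega^{n-1}<0$.
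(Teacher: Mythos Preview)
Your scaffolding is right and matches the paper: reduce to the projective case, run the MMP via \cite{bchm} to reach a Mori fiber space $\phi:Z\to B$, desingularize, then transport back by Theorem~\ref{bi-invariance}. The gap is in your Step~B construction on the Mori fiber space.

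First, the induction is set up on a false premise: the base $B$ of a Mori fiber space need not be uniruled. Already for a geometrically ruled surface $\mathbb P^1\times C\to C$ with $g(C)\geq 1$, or a $\mathbb P^r$-bundle over an abelian variety, the base is not uniruled, so there is no inductive hypothesis to invoke. Relatedly, the adjunction split you propose does not give two negative terms: the $f^*K_B$ piece can contribute zero or positively. Second, the ``gluing'' you worry about --- producing an honestly balanced $(n-1,n-1)$-form from fiber and base data, across singular fibers and through a resolution --- is a problem you need not face at all.

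The paper's argument is purely intersection-theoretic and avoids both issues. On a projective Mori fiber space $\phi:Z\to B$ with $\dim B=b$, fix ample line bundles $L$ on $B$ and $H_0$ on $Z$, and set $H_m=m\,\phi^*L+H_0$. Then
\[
K_Z\cdot H_m^{\,n-1}=\binom{n-1}{b}m^b\,(L^b)\,\bigl(K_F\cdot H_0|_F^{\,n-1-b}\bigr)+O(m^{b-1}),
\]
where $F$ is a general fiber; since $-K_F$ is ample, the leading coefficient is negative, so $K_Z\cdot H_m^{\,n-1}<0$ for $m\gg 0$. Passing to a resolution $f:\hat Z\to Z$, one repeats the trick with $mf^*H_Z+H_0'$ to get an \emph{ample} class on $\hat Z$ with $K_{\hat Z}\cdot H_{\hat Z}^{\,n-1}<0$; a Hodge metric in this class is K\"ahler, hence balanced, with the desired sign. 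No induction on $\dim B$, no relative-canonical splitting, no PDE gluing. (The paper also records a second, even shorter proof via the cone dualities of Section~\ref{cones} and \cite{boucksom}: $K_X$ not pseudoeffective implies, by duality, the existence of a balanced class pairing negatively with $c_1(K_X)$.)
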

Recall that a compact complex manifold is Moishezon if it is bimeromorphic 
to a projective manifold.

We provide two proofs for this result. One proof uses the minimal 
model program. A second proof is based on ideas of 
Toma \cite{toma}, and it relies on the results of Boucksom, Demailly, 
P\u aun and Peternell \cite{boucksom}, bypassing the minimal 
model program.

\medskip

A generalization of the minimal model program to the class of 
K\"ahler manifolds is known only in complex dimension three 
\cite{hor-pet1, hor-pet2}.  We prove the following extension 
of Theorem \ref{thmA-proj} in dimension three:

\begin{main}
\label{thmA}
Every uniruled threefold $X$ of class $\cal C$ admits a balanced metric 
$\omega^2$ such that 
$$
\int_X c_1(K_X)\wedge \omega^2<0.
$$
\end{main}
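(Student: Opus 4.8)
The plan is to follow the same two-step scheme that underpins Theorems A and B, now over the base field of class $\cal C$ threefolds rather than projective ones. By Theorem \ref{bi-invariance}, the existence of a balanced metric $\omega^{n-1}$ with $\int_X c_1(K_X)\wedge\omega^{n-1}<0$ is a bimeromorphic invariant; every class $\cal C$ manifold is bimeromorphic to a compact K\"ahler manifold, and uniruledness is a bimeromorphic invariant too, so it suffices to prove the statement for a single well-chosen representative of the bimeromorphism class of a uniruled K\"ahler threefold. Here I would invoke the K\"ahler minimal model program in dimension three of H\"oring--Peternell \cite{hor-pet1, hor-pet2}: a uniruled compact K\"ahler threefold admits, after a sequence of divisorial contractions and flips, a bimeromorphic model that is a Mori fiber space $\pi\colon X'\to S$ in the K\"ahler category, with $-K_{X'}$ $\pi$-ample, $\dim S\le 2$, and the general fiber a curve, surface, or the whole of $X'$ (a Fano threefold).

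Next I would produce the desired balanced metric on the Mori fiber space $X'$ directly. Since $\dim X'=3$, a balanced metric is the form $\omega^{2}$ for a Hermitian metric $\omega$, and on a K\"ahler manifold any K\"ahler form is balanced; so it is enough to find a K\"ahler class $[\omega]$ on $X'$ (or on a further small resolution, as $X'$ may be mildly singular in the K\"ahler MMP, in which case one passes to a resolution and uses Theorem \ref{bi-invariance} again) with $\int_{X'} c_1(K_{X'})\wedge [\omega]^{2}<0$. The construction follows the Fano-fibration geometry exactly as in the proof of Theorem \ref{thmA-proj}: take a $\pi$-relatively ample class $\xi$ with $\int_F c_1(K_{X'})\wedge \xi^{\dim F}<0$ on the general fiber $F$ (which exists because $-K_{X'}$ is $\pi$-ample), pull back an ample class $\eta$ from the base $S$ (or from a Hodge class if $S$ is a K\"ahler surface or point), and consider $[\omega_t]=\xi+t\,\pi^{*}\eta$ for $t\gg 0$; this is K\"ahler for $t$ large, and expanding $\int_{X'}c_1(K_{X'})\wedge(\xi+t\pi^{*}\eta)^{2}$ in powers of $t$, the leading terms are governed by the fiberwise negativity of $c_1(K_{X'})$, so the integral is negative for suitable $t$. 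One checks the three cases $\dim S=0,1,2$ separately, but each is the dimension-three specialization of an argument already carried out for the projective Mori fiber spaces.

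The main obstacle is ensuring that the output of the K\"ahler MMP in dimension three is a \emph{smooth} (or at worst resolvable with control) Mori fiber space on which the above numerical computation can be run, and that the intermediate divisorial contractions and flips preserve the class $\cal C$ / balanced-with-negative-Chern-integral property. The first point is handled by \cite{hor-pet1, hor-pet2}, where the relevant singularities are $\mathbb{Q}$-factorial terminal and the contraction/fibration structure is established; the second point is exactly where Theorem \ref{bi-invariance} does the work, since each step of the MMP is bimeromorphic, and one only needs to know that at the \emph{end} there is a model admitting the metric, then transport it back. A secondary technical point is that in the K\"ahler case the Mori fiber space base $S$ need not be projective when it is a surface, so ``ample class on $S$'' must be read as ``K\"ahler class,'' and one must verify the pulled-back class together with the relatively ample class spans a K\"ahler class upstairs; this is a standard positivity argument (Kleiman-type criterion for the relative situation, plus openness of the K\"ahler cone) once the fibration is in hand.
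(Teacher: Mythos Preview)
Your proposal is correct and follows essentially the same route as the paper: reduce to a K\"ahler representative, invoke the H\"oring--Peternell K\"ahler MMP to reach a (possibly singular) Mori fiber space, pass to a smooth resolution, construct there a K\"ahler class with negative pairing against $c_1(K)$ via the $t\,\phi^*\omega_S+\omega_Z$ expansion, and transport back by Theorem~\ref{bi-invariance}. The one technical point the paper makes explicit that you leave implicit is the appeal to Varouchas \cite{varouchas} guaranteeing that the desingularization of the K\"ahler Mori fiber space is itself K\"ahler, so that a genuine K\"ahler (hence balanced) class is available on the smooth model.
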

Recall that a complex manifold is called of class $\cal C$ if 
it is bimeromorphic to a K\"ahler manifold. This class 
of manifolds is strictly larger than the class of K\"ahler 
manifolds in dimension three or more, and it contains 
the class of Moishezon manifolds. Every class $\cal C$
manifold carries balanced metrics by 
\cite{alessandrini3, alessandrini2}.

\medskip

The bimeromorphism invariance of the class of balanced manifolds indicates 
that balanced metrics are natural to be considered as good replacements 
of K\"ahler metrics in order to extend Yau's differential geometric characterization of 
uniruledness in higher dimensions. However, this is not the only class of 
Hermitian metrics with such good properties. In fact,  every complex manifold 
admits Gauduchon metrics, that is positive $(1,1)$-forms $\omega$ such 
that $\partial\bar\partial \omega^{n-1}=0$ \cite{g-metrics}. 
Notice that every balanced metric is a Gauduchon metric, while the converse is false. 
Moreover, from the positivity criterion of Lamari \cite{lamari1} and 
\cite[Corollary 0.3]{boucksom} one can see that every uniruled projective 
manifold admits Gauduchon metrics of positive total scalar Chern 
curvature (see also Theorem \ref{characterization} below). In Theorems 
\ref{thmA-proj} and \ref{thmA} we prove therefore a stronger result.

\bigskip

Conversely, Yau's approach \cite{yau} can be adapted to show that the existence of a 
K\"ahler or a balanced metric of positive total scalar Chern curvature 
on a complex manifold implies that the Kodaira dimension of the manifold 
is $-\infty$. One can easily see that uniruledness implies that Kodaira 
dimension is $-\infty$, but the converse is a well-known open problem. 
Heier and Wong were able to show in \cite[Theorem 1.1]{heier-wong} that every projective 
manifold which admits a K\"ahler metric of positive total scalar curvature is in 
fact uniruled. We extend here Theorem 1.1 of Heier and Wong 
\cite{heier-wong}, and combining with the results from Theorems \ref{thmA-proj} 
and \ref{thmA} we provide the following characterization of uniruledness:

\begin{main}
\label{characterization}
Let $X$ be an $n$-dimensional Moishezon manifold. The following 
statements are equivalent:
\begin{itemize}
\item[ i)] $K_X$ is not pseudoeffective;
\item[ ii)] $X$ is uniruled;
\item[ iii)] $X$ admits a balanced metric of positive Chern total scalar 
curvature;
\item[ iv)] $X$ admits a Gauduchon metric of positive Chern total scalar 
curvature.
\end{itemize}
Moreover, the same statements hold true if $n=3$ and $X$ is of 
Fujiki class $\mathcal C.$
\end{main}

The proof of the implications $ iv)\Longrightarrow i)\Longrightarrow ii)$ 
relies on the positivity criterion of Lamari \cite[Th\'eor\`eme 1.2 (1)]{lamari1}, 
and on remarkable results of  Boucksom, Demailly, Peternell and P\u aun  
\cite{boucksom} and Brunella \cite{brunella}.

\medskip

We explore next the possibility of extending 
the above characterization of uniruledness in terms of the 
positivity of the total scalar Chern curvature of a balanced 
metric beyond class $\cal C.$ 
In general, the existence of a balanced metric fails. 
However, in dimension three, a large class of 
uniruled manifolds admitting such metrics is given by complex 
manifolds bimeromorphic to  twistor spaces \cite{ahs}. We prove:

\begin{main}
\label{twistors}
Every three dimensional complex manifold $X$ bimeromorphic to a twistor space 
admits a balanced metric $\omega^2$ such that 
$$
\int_{X} c_1(K_X)\wedge \omega^2<0.
$$
\end{main}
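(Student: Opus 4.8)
The plan is to combine the bimeromorphic invariance of Theorem~\ref{bi-invariance} with an explicit construction of a suitable balanced metric on the twistor space itself. Recall that a twistor space $Z$ of dimension three sits in a fibration $\pi\colon Z\to N$ over an anti-self-dual Riemannian four-manifold $N$, with fibers rational curves (twistor lines) $\PP^1$ having normal bundle $\OO(1)\oplus\OO(1)$; in particular every point of $Z$ lies on such a curve, so $Z$ is uniruled, and hence any $X$ bimeromorphic to $Z$ is uniruled as well. By Theorem~\ref{bi-invariance}, it suffices to produce on $Z$ itself a balanced metric $\omega^{2}$ with $\int_Z c_1(K_Z)\wedge\omega^{2}<0$, and the conclusion on $X$ follows immediately.

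To build such a metric on $Z$, the first step is to recall that by a theorem of Hitchin the only K\"ahler twistor spaces are $\PP^3$ and the flag manifold $F_{1,2}$, so in general $Z$ is genuinely non-K\"ahler and one must work with balanced metrics. However, twistor spaces always carry a natural Hermitian metric coming from the round metric on the fibers together with the base metric (the Eells--Salamon or "twistor" metric), and a classical observation --- essentially due to the structure of the twistor fibration --- is that a $(2,2)$-form can be assembled from $\pi^*(\text{volume form of }N)$ and the fiberwise Fubini--Study form so as to be $d$-closed; concretely one checks that the standard twistor Hermitian metric is already balanced, or can be conformally/multiplicatively modified to a balanced one, because $dd^c$ of the relevant potential vanishes for dimension reasons on the three-fold. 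So the second step is: exhibit $\omega$ with $d(\omega^{2})=0$.

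The third and crucial step is the sign of the total scalar Chern curvature. Here one uses the well-known fact that the anticanonical bundle $K_Z^{-1}$ of a twistor threefold is divisible by $4$ in $\Pic(Z)$, $K_Z^{-1}\cong L^{4}$ where $L|_{\text{twistor line}}=\OO(1)$; more to the point, $K_Z^{-1}$ is positive along the fibers, and restricting the pairing $\int_Z c_1(K_Z)\wedge\omega^{2}$ to the family of twistor lines one gets a negative contribution since $c_1(K_Z)\cdot[\text{line}] = -4 <0$. The plan is therefore to choose $\omega$ so that $\omega^{2}$ is, up to a positive multiple, cohomologous (or at least has the same pairing with $c_1(K_Z)$) to a closed positive $(2,2)$-current supported on a suitable sweeping family of twistor lines --- this is where the balanced condition is used, since balanced classes pair naturally with curve classes --- forcing $\int_Z c_1(K_Z)\wedge\omega^{2}<0$. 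Equivalently, since $-K_Z$ is $\pi$-ample, one can take $\omega$ to restrict on each fiber to a multiple of $c_1(-K_Z|_{\PP^1})$ and estimate directly.

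The main obstacle I expect is precisely this last step: verifying that the balanced metric can be chosen so that the pairing against $c_1(K_Z)$ is negative, rather than merely nonpositive or of indefinite sign. The natural twistor Hermitian metric controls the fiber directions well (where $-K_Z$ is manifestly positive) but its behavior in the base directions is governed by the anti-self-dual four-manifold $N$, whose scalar curvature sign is not constrained; one must argue that by scaling the fiber part of the metric relative to the base part (a standard "adiabatic" or Cheeger--Gromov type rescaling $\omega_t = \pi^*\omega_N + t\,\omega_{\text{fib}}$, then analyzing $\int_Z c_1(K_Z)\wedge\omega_t^{2}$ as $t\to\infty$) the fiber contribution $-4$ dominates and the integral becomes strictly negative, while checking that the balanced condition $d(\omega_t^{2})=0$ is preserved (or restored after a correction) throughout the rescaling. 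Managing the interaction between the balanced condition and the rescaling --- ensuring one does not destroy $d(\omega_t^2)=0$ while making the curvature integral negative --- is the technical heart of the argument.
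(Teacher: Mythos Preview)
Your reduction to the twistor space $\mathcal{Z}$ via Theorem~\ref{bi-invariance} is exactly what the paper does, and you correctly identify that the family $h_t=\varpi^*g+t\,g^{\mathrm{vert}}$ of natural twistor metrics is balanced for every $t>0$ (this is due to Michelsohn, Mu\v{s}karov, de~Bartolomeis--Nannicini, cited in the paper). So your worry that rescaling might destroy $d(\omega_t^2)=0$ is unfounded: the whole one-parameter family is balanced, and no correction is needed.

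Where you diverge from the paper is in how to obtain the sign $\int_{\mathcal{Z}} c_1(K_{\mathcal{Z}})\wedge\omega_t^2<0$. You propose a direct cohomological computation using $K_{\mathcal{Z}}\cdot[\text{twistor line}]=-4$ and an adiabatic limit $t\to\infty$. The paper instead takes $t\to 0$ and invokes the explicit Riemannian scalar curvature formula of Davidov--Mu\v{s}karov,
\[
s_{\mathcal{Z}}(\sigma,x)=s_M(x)+\tfrac{t}{4}\bigl(\|\mathcal{R}(\sigma)\|^2-\|\mathcal{R}_-\|_x^2\bigr)+\tfrac{2}{t},
\]
so that $s_{\mathcal{Z}}>0$ pointwise for $0<t\ll 1$, and then applies Gauduchon's inequality (Proposition~\ref{ch>lc}/Corollary~\ref{ineq}) comparing total Riemannian and Chern scalar curvatures to conclude. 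This bypasses any delicate cohomological bookkeeping entirely.

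Your approach could in principle be made to work, but note two issues. First, the ``horizontal'' part of $\omega_t$ is \emph{not} the pullback of a fixed $2$-form on $N$ (since $N$ is not complex and the horizontal complex structure varies over the fiber), so the expansion $\omega_t=\pi^*\omega_N+t\,\omega_{\mathrm{fib}}$ is only schematic. Second, and more importantly, your limit direction appears to be backwards: the term that isolates the fiber degree $-4$ is the $t$-independent term $\int_{\mathcal{Z}} c_1(K_{\mathcal{Z}})\wedge(\text{horizontal})^2$, which dominates as $t\to 0$, not $t\to\infty$; the $O(t)$ cross-term has no obvious sign. The paper's route via Riemannian scalar curvature avoids having to sort this out.
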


\section{Total scalar curvatures}
\label{metrics-intro}

In this section we briefly recall some well-known background 
material in complex differential geometry to introduce the 
terminology.

\bigskip

Let $(M,g)$ be a Hermitian manifold and $\omega$ its
K\"ahler form. On  $(M,g)$ one can consider two 
canonical connections: the Levi-Civita connection, and 
the Chern connection.

Let $s$ denote the scalar 
curvature of the Levi-Civita connection. The {\em total scalar 
Riemannian curvature} is defined as 
$$
\int_M s\mu_g=\int_M  \frac{s\omega^n}{n!},
$$ 
where $\mu_g=\dfrac{\omega^n}{n!}$ is the volume form.

Let $s_C$ denote the scalar curvature of the Chern 
connection associated to the Hermitian metric $g.$ The 
{\em total scalar Chern curvature} is defined by 
$$
\int_M s_C\mu_g. 
$$ 
The Ricci curvature form of the Chern connection represents
the first Chern class of $M$ rescaled by a factor of $2\pi,$ 
and $c_1(M)=-c_1(K_M),$ where $K_M$ 
is the canonical line bundle of $M.$
Since the scalar curvature is the trace of the Ricci curvature form, 
we can write 
\begin{equation}
\label{tscc}
\int_M s_C\mu_g=\int_M  \frac{s_C\omega^n}{n!}=
-\frac{2\pi}{(n-1)!}\int_M c_1(K_M)\wedge \omega^{n-1}.
\end{equation}

A result due to Gauduchon \cite[page 506]{g-torsion} 
(see also \cite[Corollary 1.11]{liu-yang}) compares the total 
scalar Riemannian curvature and the total scalar Chern curvature:

\begin{prop}
\label{ch>lc}
Let $(M,g)$ be a compact, complex manifold equipped with a 
Hermitian metric. Then
$$
\int_M s_C\mu_g\geq \frac12\int_M s\mu_g,
$$
with equality if and only if the metric is K\"ahler.
\end{prop}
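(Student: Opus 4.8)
The plan is to obtain a pointwise identity comparing $s$ and $s_C$ through the torsion of the Chern connection and then to integrate it over $M$. Denote by $D$ the Chern connection and by $\nabla$ the Levi--Civita connection of $(M,g)$, and set $\gamma:=D-\nabla$; since both connections are metric, $\gamma$ is a $1$-form with values in the skew-symmetric endomorphisms of $TM$. As $\nabla$ is torsion-free, the torsion $T^{D}$ of the Chern connection is exactly the antisymmetrization of $\gamma$, so $\gamma$ is recovered from $T^{D}$ by the usual contorsion formula; and for a Hermitian metric $T^{D}$ is of type $(2,0)$ and is algebraically determined by $\partial\omega$. Hence $\gamma$ is a pointwise linear expression in $d\omega$, and $\gamma$ vanishes identically if and only if $g$ is K\"ahler.

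I would then expand the Chern curvature as $R^{D}=R^{\nabla}+d^{\nabla}\gamma+\gamma\wedge\gamma$ and take the two relevant traces: contracting $R^{\nabla}$ fully with $g$ yields the Riemannian scalar curvature $s$, while the trace of $R^{D}$ with respect to $g$ and $J$ yields the Chern scalar curvature $s_C$ of \eqref{tscc}. In the difference of these traces, the first-order term $d^{\nabla}\gamma$ contributes only the divergence $\mathrm{div}\,V$ of a globally defined vector field $V$, built linearly from the Lee form $\theta$ of $\omega$, whereas the bilinear term $\gamma\wedge\gamma$ contributes a pointwise quadratic form $Q$ in $d\omega$. This is precisely Gauduchon's torsion computation \cite{g-torsion} (see also \cite[Corollary~1.11]{liu-yang}): the outcome is an identity of the shape
$$
s = 2\,s_C + \mathrm{div}\,V - Q ,
$$
in which $Q\geq 0$ pointwise and $Q$ vanishes at a point exactly when $d\omega$ does there.

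Integrating this identity over the compact manifold $M$ against the volume form $\omega^{n}/n!$ annihilates $\mathrm{div}\,V$ and gives
$$
\int_{M}s\,\mu_{g} = 2\int_{M}s_C\,\mu_{g} - \int_{M}Q\,\mu_{g}\leq 2\int_{M}s_C\,\mu_{g},
$$
which is the desired inequality $\tfrac12\int_{M}s\,\mu_{g}\leq\int_{M}s_C\,\mu_{g}$. As $Q$ is nonnegative and continuous, equality forces $Q\equiv 0$, hence $d\omega\equiv 0$, i.e.\ $g$ is K\"ahler; conversely, if $g$ is K\"ahler then $\gamma\equiv 0$ and the identity collapses to the pointwise equality $s=2s_C$, so equality holds. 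The only genuinely laborious ingredient — the part one imports verbatim from \cite{g-torsion} — is the torsion bookkeeping in the middle step and the verification that the quadratic form $Q$ is nonnegative with exactly the asserted zero locus; everything else is formal.
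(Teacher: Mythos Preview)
The paper does not supply its own proof of this proposition; it merely attributes the result to Gauduchon \cite[p.~506]{g-torsion} (see also \cite[Corollary~1.11]{liu-yang}) and states it without argument. Your sketch is a correct outline of precisely that argument---comparing the Chern and Levi--Civita connections via the contorsion tensor, arriving at a pointwise identity of the shape $s=2s_C+\mathrm{div}\,V-Q$ with $Q\ge 0$ quadratic in $d\omega$, and integrating---so your proposal agrees with, and in fact fleshes out, what the paper simply cites.
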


\begin{cor}
\label{ineq}
Let $(M,g)$ be a compact, complex manifold of dimension $n$ 
equipped with a Hermitian metric. If the scalar Riemannian 
curvature of $M$ is positive, then 
$$
\int_M c_1(K_M)\wedge \omega^{n-1}<0.
$$
\end{cor}
\qed

\begin{defn}
Let $(M,g)$ be a compact complex manifold of 
complex dimension $n$ equipped with a Hermitian
metric $g,$ and let $\omega$ denote its K\"ahler form.
If $d\omega = 0,$ then $g$ is called a K\"ahler metric. 
A complex manifold which admits a K\"ahler metric is 
called a K\"ahler manifold.
\end{defn}

If $g$ is a K\"ahler metric, then its K\"ahler form 
$\omega$ is a real, $d$-closed, strictly positive $(1,1)$-form. 
Conversely, given a smooth, strictly positive, $d$-closed $(1,1)$-form 
$\omega$, there exists a Hermitian metric $g$ whose K\"ahler 
form is $\omega.$ We will use the notation $(M,\omega)$ to 
denote a K\"ahler manifold with prescribed K\"ahler form.

\begin{defn}
Let $(M,g)$ be a compact complex manifold of 
complex dimension $n$ equipped with a Hermitian
metric $g,$ and let $\omega$ denote its K\"ahler form.
If $d(\omega^{n-1}) = 0,$ then $g$ is called a balanced 
metric. A complex manifold which admits a balanced 
metric is called a balanced manifold. We will use the 
notation $(M,\omega^{n-1})$ to denote a balanced manifold. 
\end{defn}

Given a balanced metric of K\"ahler form $\omega,$ the 
$(n-1,n-1)$-form $\omega^{n-1}$ is real, strictly positive 
and $d$-closed. Conversely, it is an easy exercise in linear 
algebra to see that given a real, strictly positive, $d$-closed 
$(n-1,n-1)$-form $\Omega,$ there exists a unique Hermitian 
metric of K\"ahler form $\omega$ such that $\Omega=\omega^{n-1}$ 
(\cite[page 279]{michelsohn}). Throughout the paper, by 
a balanced metric we mean a real, $d$-closed, strictly positive 
$(n-1,n-1)$-form, denoted by $\omega^{n-1}.$

\bigskip

A K\"ahler manifold is balanced, and if $n=2$ the converse is also true. 
In higher dimensions the converse is false. A large class 
of counterexamples is provided by the twistor spaces of closed 
anti-self-dual four-manifolds (see Sect. 4). 
Another interesting class of non-K\"ahler balanced manifolds has 
been found by Fu, Li and Yau. In \cite{fuliyau}, the authors 
showed that the complex structures with trivial canonical bundles constructed  
by Lu and Tian \cite{lu-tian} and Friedman \cite{friedman} 
on connected sums of  $S^3\times S^3$ carry a balanced metric.

\section{Positive cones in Bott-Chern and Aeppli cohomology groups}
\label{cones}

In this section we recall the definitions of the Bott-Chern and Aeppli 
cohomology groups, and of the pseudoeffective and the 
nef cones. In the K\"ahler case, these cohomology groups 
are isomorphic to the usual Dolbeault cohomology groups 
due to the $\partial\bar\partial$-lemma. However, we 
prefer to work with the Bott-Chern and Aeppli cohomology 
groups since the class of a $d$- or $i\partial\bar\partial$-closed 
positive current lies naturally in these cohomology groups, and, 
moreover, the duality statements between the nef and 
pseudoeffective cones (Theorem \ref{duality}) can be naturally 
stated in this setting. For more details, see \cite{schweitzer}.

\bigskip

Let $X$ be a compact complex manifold of dimension $n.$ 
The Bott-Chern cohomology groups are defined as 
\begin{equation*}
H^{p,q}_{BC}(X,{\mathbb C})=
\frac{\{\alpha\in {\mathcal C}^{\infty}_{p,q}(X)\vert d\alpha=0\}}
{\{i\partial\bar\partial\beta\vert \beta\in {\mathcal C}^{\infty}_{p-1,q-1}(X)\}},
\end{equation*}
and the Aeppli cohomology groups are
\begin{equation*}
H^{p,q}_A(X,{\mathbb C})=
\frac{\{\alpha\in {\mathcal C}^{\infty}_{p,q}(X)\vert i\partial\bar\partial \alpha=0\}}
{\{\partial\beta+\bar\partial\gamma\vert\beta\in {\mathcal C}^{\infty}_{p-1,q}(X),
\gamma\in {\mathcal C}^{\infty}_{p,q-1}(X)\}}
\end{equation*} 
Since all the operators involved in the definitions of the 
above cohomology groups are real in bidegrees $(p,p)$ 
the real cohomology groups $H^{p,p}_{BC}(X,{\mathbb R})$ and 
$H^{p,p}_A(X,{\mathbb R})$ are well-defined. 
The above groups can be defined by using smooth 
forms or currents. We use the notation $[s]$ for the class of a 
$d$-closed form or current $s$ in 
$H^{\bullet,\bullet}_{BC}$ and $\{t\}$ for the class of a 
$\partial\bar\partial$-closed form or current $t$ in 
$H^{\bullet,\bullet}_A.$ The groups 
$H^{p,q}_{BC}(X, {\mathbb C})$ and 
$H^{n-p,n-q}_A(X, {\mathbb C})$ are dual via the pairing
\begin{equation} 
H^{p,q}_{BC}(X, {\mathbb C})\times H_A^{n-p,n-q}
(X, {\mathbb C})\to {\mathbb C}, 
([\alpha ],\{\beta\})\to 
\int_X\alpha\wedge\beta
\end{equation}
By an abuse of notation, we also denote by $(\alpha,\beta)$ 
the evaluation $\int_X \alpha\wedge \beta,$ regardless of 
whether $\alpha$ and $\beta$ denote appropriate forms, currents or 
cohomology classes.

\begin{defn}[Lelong \cite{lelong}]
Let $T$ be a current of bi-dimension $(p,p).$ We say that 
$T$ is a positive current, and we write $T\geq 0,$ if 
$T\wedge i\alpha_1\wedge \bar \alpha_1 \wedge \cdots 
\wedge  i\alpha_p\wedge \bar \alpha_p$ is a positive measure, for all 
 smooth $(1,0)-$forms $\alpha_1,\dots, \alpha_p.$
\end{defn}

For $\#\in\{BC, A\}$ and $p\in\{1,n-1\}$ we define the following cones:
\begin{enumerate}
\item the $\#-${\it pseudoeffective} cone 
\begin{equation}
\label{pef}
{\mathcal E}^p_{X,\#}=\{\gamma\in H^{p,p}_{\#}
(X,{\mathbb R})\vert\exists T\geq 0, T\in\gamma\},
\end{equation}
where by $T$ we denote here a current.

\item the $\#-${\it nef} cone 
\begin{equation}
\label{nef}
{\mathcal N}^p_{X,\#}=\{\gamma\in H^{p,p}_{\#}
(X,{\mathbb R})\vert \forall \varepsilon >0,
\exists\alpha_{\varepsilon}\in \gamma,
\alpha_{\varepsilon}\geq -\varepsilon \omega^p\}
\end{equation}
where $\omega$ is the K\"ahler form of a fixed Hermitian 
metric on $X$ and $\alpha_{\varepsilon}$ denotes a smooth 
$(p,p)-$form. 
\end{enumerate}

Notice that all of the cones defined above are convex cones.

\begin{rmk}
The pseudoeffective and nef cones ${\mathcal E}^1_{X,BC}$ 
and ${\mathcal N}^1_{X,BC}$ were first introduced by Demailly 
\cite[Definition 1.3]{de-reg}, who stressed their importance. We
adapt here his definitions to $(n-1,n-1)$ Bott-Chern cohomology 
classes and to $(p,p)$ Aeppli cohomology classes, where 
$p\in\{1,n-1\}$. 
\end{rmk}

\medskip

The following two lemmas are standard, and some of the statements below 
are proved in \cite[Proposition 6.1]{de-reg}. 
As they play a crucial part in our argument, and for the reader's convenience, 
we include their proofs.

\begin{lemma}
\label{E1BC-closed}
The cone ${\mathcal E}_{X,BC}^1$ is closed and 
${\mathcal N}_{X,BC}^1\subset {\mathcal E}_{X,BC}^1.$ 
\end{lemma}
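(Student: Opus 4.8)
The plan is to prove the two assertions of Lemma~\ref{E1BC-closed} separately, starting with the inclusion ${\mathcal N}_{X,BC}^1\subset {\mathcal E}_{X,BC}^1$, which is the easier half. Given a nef class $\gamma\in {\mathcal N}_{X,BC}^1$, for each $\varepsilon>0$ we have a smooth representative $\alpha_\varepsilon\in\gamma$ with $\alpha_\varepsilon\geq -\varepsilon\omega$. Then $\alpha_\varepsilon+\varepsilon\omega$ is a smooth semipositive $(1,1)$-form, hence a positive current, lying in the class $\gamma+\varepsilon[\omega]$. This shows $\gamma+\varepsilon[\omega]\in{\mathcal E}_{X,BC}^1$ for all $\varepsilon>0$; once closedness of ${\mathcal E}_{X,BC}^1$ is established, letting $\varepsilon\to 0$ yields $\gamma\in{\mathcal E}_{X,BC}^1$. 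So the inclusion reduces to the closedness statement.

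For closedness of ${\mathcal E}_{X,BC}^1$, I would take a sequence $\gamma_k\in{\mathcal E}_{X,BC}^1$ converging to $\gamma$ in the finite-dimensional real vector space $H^{1,1}_{BC}(X,{\mathbb R})$, and choose positive currents $T_k\in\gamma_k$. The key step is to obtain a uniform mass bound on the $T_k$, so that by weak-$*$ compactness of the space of positive currents of bounded mass (Banach--Alaoglu) a subsequence converges weakly to a positive current $T$. The mass of a positive $(1,1)$-current $T$ is comparable to $\int_X T\wedge\omega^{n-1}$, which depends only on the cohomology class via the Bott-Chern/Aeppli pairing (using that $[\omega^{n-1}]$ defines a class in $H^{n-1,n-1}_{A}$ since $\omega$ is — or may be chosen — such that $\omega^{n-1}$ is $d$-closed, or more simply pairing against any fixed Gauduchon metric); since $\gamma_k\to\gamma$, these integrals are bounded. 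One must check that the weak limit $T$ still represents the limit class $\gamma$: this follows because the map sending a positive current to its Bott-Chern class is continuous for the weak-$*$ topology on currents of bounded mass, the cohomology being finite-dimensional and the pairing with smooth Aeppli-closed forms being continuous. Hence $T\geq 0$ and $T\in\gamma$, so $\gamma\in{\mathcal E}_{X,BC}^1$.

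The main obstacle is the uniform mass estimate together with the verification that passing to the weak limit is compatible with taking cohomology classes; this is where one must be careful about which cohomology the currents live in and which pairing controls the mass. Concretely, I would fix once and for all a Gauduchon metric $\omega_G$ on $X$ (so that $\partial\bar\partial\,\omega_G^{n-1}=0$), making $\omega_G^{n-1}$ an $i\partial\bar\partial$-closed $(n-1,n-1)$-form and hence defining a class in $H^{n-1,n-1}_A(X,{\mathbb R})$; then $\int_X T_k\wedge\omega_G^{n-1}$ is determined by $\gamma_k$ alone, is bounded, and controls the mass of $T_k$. Continuity of the class map under weak-$*$ convergence is then immediate from the definition of the pairing by integration against smooth representatives. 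I would also remark that the same argument gives closedness of ${\mathcal E}_{X,BC}^{n-1}$ and of the analogous Aeppli cones, which will presumably be needed later, but for this lemma only the $p=1$ Bott-Chern case is required.
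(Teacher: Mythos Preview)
Your closedness argument is exactly the paper's: fix a Gauduchon metric $\omega_G$, observe that $\int_X T_k\wedge\omega_G^{n-1}$ depends only on the Bott--Chern class $\gamma_k$, hence is bounded, extract a weakly convergent subsequence, and check that the limit is a closed positive current in the class $\gamma$.

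For the inclusion there is a small but genuine gap in your reduction to closedness. You write that $\alpha_\varepsilon+\varepsilon\omega$ lies in the class $\gamma+\varepsilon[\omega]\in H^{1,1}_{BC}$, but the reference Hermitian form $\omega$ in the definition of ${\mathcal N}^1_{X,BC}$ is \emph{not} assumed $d$-closed, so $[\omega]$ does not make sense in $H^{1,1}_{BC}(X,{\mathbb R})$ on an arbitrary compact complex manifold --- and the lemma is stated in that generality. Your argument, as written, only goes through when $X$ is K\"ahler.

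The paper sidesteps this by running the same compactness argument directly on the positive (but not $d$-closed) forms $\alpha_\varepsilon:=\varepsilon\omega+\alpha+i\partial\bar\partial\varphi_\varepsilon\geq 0$, with $\omega$ Gauduchon: the masses $\int_X\alpha_\varepsilon\wedge\omega^{n-1}=\varepsilon\int_X\omega^n+\int_X\alpha\wedge\omega^{n-1}$ are bounded for $0<\varepsilon\leq 1$, so a subsequence converges weakly to some $T\geq 0$; since $d\alpha_\varepsilon=\varepsilon\,d\omega\to 0$ the limit $T$ is $d$-closed, and pairing against Aeppli classes shows $[T]=[\alpha]$. All your ingredients are present; only the packaging ``$\gamma+\varepsilon[\omega]$ is pseudoeffective, now invoke closedness'' does not literally work outside the K\"ahler case.
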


\begin{proof} The proof of the lemma relies on the 
existence of Gauduchon metrics on any compact complex manifold 
\cite{g-metrics}. That means $X$ admits a Hermitian metric $g$ 
with K\"ahler form $\omega$ satisfying 
$\partial\bar\partial \omega ^{n-1}=0.$

Indeed, suppose $([T_j])_j$ is a 
sequence of pseudoeffective classes represented by the closed 
positive currents $T_j$ such that 
$[T_j]\to\gamma\in H^{1,1}_{BC}(X, {\mathbb R})$. 
Fix $g$ a Gauduchon metric with K\"ahler form $\omega,$ 
and notice that $\int_XT_j\wedge \omega^{n-1}$ depends 
only on the Aeppli cohomology class $\{\omega^{n-1}\}$ 
and on the BC-cohomology class $[T_j]$, not on the representative 
$\omega$. Since the sequence $(\int_X T_j\wedge \omega^{n-1})_j$ is bounded, 
we can assume, after passing to a subsequence,  that 
$(T_j)_j$ is weakly convergent to a closed positive current 
$T$. Then $\gamma=[T]\in {\mathcal E}_{X,BC}^1.$

To prove that ${\mathcal N}_{X,BC}^1\subset {\mathcal E}_{X,BC}^1,$ let 
$[\alpha]\in {\mathcal N}_{X,BC}^1,$ where $\alpha$ is a $d$-closed smooth 
$(1,1)$-form. Then, by definition, for every $\varepsilon >0$, there exists 
$\varphi_{\varepsilon}\in {\mathcal C}^{\infty}(X,{\mathbb R})$ such that 
$\alpha_{\varepsilon}:=
\varepsilon \omega+\alpha+i\partial\bar\partial\varphi_{\varepsilon}\geq 0$. 
Since $\int_X\alpha_{\varepsilon}\wedge \omega^{n-1}$ is bounded for 
$0<\varepsilon\leq 1$, we extract a weakly convergent subsequence 
$(\alpha_{\varepsilon_j})_j$, converging to a closed, positive current in 
class $[\alpha]$. Hence $[\alpha]\in {\mathcal E}_{X,BC}^1.$
\end{proof}

 \begin{lemma}
The cones ${\mathcal N}^p_{X,\#}$ are closed, where 
$p\in\{1,n-1\}$ and $\#\in \{BC, A\}$.
\end{lemma}

\begin{proof}
Let $\{\gamma_j\}_j$ be a sequence, where 
$\gamma_j\in {\mathcal N}_{X,\#}^p$ and 
$\gamma_j\to \gamma$ in $H^{p,p}_{\#}(X,\RR)$. 
In each cohomology 
class $\gamma_j,$ we choose the unique harmonic representative 
$\beta_j$ and let $\beta$ be the unique harmonic representative 
in $\gamma$ with respect to some fixed Hermitian metric on $X$ 
(see \cite{schweitzer} for more on the harmonic forms 
in the Bott-Chern and Aeppli cohomology groups 
\footnote{The Bott-Chern Laplacian was
introduced by Kodaira and Spencer in \cite[page 71]{ks}. In {\em op. cit.}, Schweitzer
adapted this construction to define the Aeppli Laplacian on the same
model.}). Then, from 
the standard theory of elliptic operators, it follows that 
$\beta_j\to\beta$ in the ${\mathcal C}^{\infty}$ topology. 
This immediately implies that $\gamma$ is nef. 
Indeed, for every $p\in\{1,n-1\}$, given $\varepsilon>0$, 
we can find $j_{\varepsilon}$ such that 
$\beta-\beta_{j_{\varepsilon}}\geq-\frac {\varepsilon}{2}\omega^p$. 
Since $\gamma_{j_{\varepsilon}}$ (which is the class of $\beta_{j_{\varepsilon}}$) 
is nef, it follows that for every $\delta>0$ there exists  a smooth form 
$\lambda_{\varepsilon,\delta}\in \gamma_{j_{\varepsilon}}$  such that 
$\lambda_{\varepsilon,\delta}\geq-\frac{\delta}{2}\omega^p$. 
Notice now that, for every 
$\varepsilon>0$ and $\delta>0,\, \beta-\beta_{j_{\varepsilon}}+\lambda_{\varepsilon,\delta}$ 
is a smooth representative of $\gamma$ which is 
$\geq-\frac{\varepsilon+\delta}{2}\omega^p$. Therefore $\gamma$ is nef.   
\end{proof}

\bigskip

Given $V$ a real vector space, denote by $V^*$ its dual. 
If $C$ a convex cone in $V$, we denote by $C^*\subset V^*$ 
its dual: 
$$
C^*=\{v^*\in V^*\vert v^*(c)\geq 0,\forall c\in C\}.
$$ 
By the Hahn-Banach Theorem, we have $C^{**}=\overline C$.

\begin{thm}
\label{duality}
Let $X$ be a compact complex manifold of dimension $n$. Then
\begin{itemize}

\item[i)]
${\mathcal N}_{X,BC}^1=({\mathcal E}_{X,A}^{n-1})^*,$

\item[ii)]
${\mathcal N}_{X,A}^{n-1}=({\mathcal E}_{X,BC}^1)^*.$
\end{itemize}
Moreover, if $X$ is  balanced, then
\begin{itemize}
\item[iii)]
${\mathcal N}_{X,A}^1=({\mathcal E}_{X,BC}^{n-1})^*,$

\item[iv)]
${\mathcal N}_{X,BC}^{n-1}=({\mathcal E}_{X,A}^1)^*.$
 
 \end{itemize}
 \end{thm}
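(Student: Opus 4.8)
The plan is to establish all four identities from the single duality statement $H^{p,q}_{BC}(X,\mathbb C)^* \cong H^{n-p,n-q}_A(X,\mathbb C)$ together with the bipolar theorem $C^{**} = \overline C$. Parts (i) and (ii) hold on any compact complex manifold, while (iii) and (iv) require the balanced hypothesis only to guarantee that the relevant pseudoeffective cone is \emph{salient} (contains no line), which is what is needed to identify a cone with the dual of its own dual without passing to the closure.

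First I would prove (i). The inclusion ${\mathcal N}_{X,BC}^1 \subseteq ({\mathcal E}_{X,A}^{n-1})^*$ is immediate: if $\alpha$ is nef then for each $\varepsilon>0$ we have a smooth representative $\alpha_\varepsilon \geq -\varepsilon\omega$, and pairing against any closed positive current $T \geq 0$ of bidimension $(1,1)$ in an Aeppli class gives $\int_X \alpha_\varepsilon \wedge T \geq -\varepsilon \int_X \omega \wedge T$; letting $\varepsilon \to 0$ shows the pairing is nonnegative, so the class lies in the dual cone. For the reverse inclusion I would pass to duals: taking duals of the obvious inclusion and using that the nef cone is closed (proved in the preceding lemma) one gets $({\mathcal E}_{X,A}^{n-1})^* \subseteq \overline{{\mathcal N}_{X,BC}^1} = {\mathcal N}_{X,BC}^1$ provided one first checks $({\mathcal N}_{X,BC}^1)^* = \overline{{\mathcal E}_{X,A}^{n-1}}$; but in fact the cleanest route is to show directly that $({\mathcal N}_{X,BC}^1)^* = \overline{{\mathcal E}_{X,A}^{n-1}}$ — a class pairing nonnegatively with all nef classes is, by a Hahn–Banach separation argument in the finite-dimensional space $H^{n-1,n-1}_A(X,\mathbb R)$, a limit of Aeppli classes of positive currents — and then apply $C^{**}=\overline C$ to ${\mathcal N}_{X,BC}^1$, which is closed, to conclude. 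The separation step is where the actual content sits: one must produce, from a linear functional positive on ${\mathcal N}_{X,BC}^1$, an honest closed positive current representing it, which is a duality-of-currents argument in the style of Demailly's regularization / Sullivan's cone theory. Part (ii) is entirely parallel with the roles of $BC$ and $A$, and of degrees $1$ and $n-1$, interchanged, using that ${\mathcal E}_{X,BC}^1$ is closed (established in Lemma \ref{E1BC-closed}).

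For (iii) and (iv) the same machinery applies, but now the pairing is $H^{1,1}_{BC} \times H^{n-1,n-1}_A$ read the other way and $H^{n-1,n-1}_{BC} \times H^{1,1}_A$; the point is that one needs a Gauduchon or balanced metric to control the mass of weakly convergent sequences of positive $(n-1,n-1)$-currents, i.e.\ to know ${\mathcal E}_{X,BC}^{n-1}$ and ${\mathcal E}_{X,A}^1$ behave well. Concretely, with $\omega^{n-1}$ a balanced (hence $d$-closed) form, $\int_X T \wedge \omega^{n-1}$ is a cohomological pairing for $T$ a positive $(n-1,n-1)$-current, giving the compactness that makes the relevant pseudoeffective cone closed (or at least makes its closure consist of genuine currents), exactly as in the proof of Lemma \ref{E1BC-closed}. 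Once closedness of the appropriate cones and the Hahn–Banach duality $C^{**}=\overline C$ are in hand, (iii) and (iv) follow by the same two-inclusion argument as (i) and (ii).

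The main obstacle I anticipate is not the formal duality — that is bookkeeping with $C^{**}=\overline C$ — but the positive-current construction underlying the nontrivial inclusion in each part: showing that a cohomology class which pairs nonnegatively with every nef class is represented (or approximated) by a closed, resp.\ $i\partial\bar\partial$-closed, positive current. This requires a Hahn–Banach separation in the finite-dimensional cohomology together with the structure theory of positive currents, and it is precisely here that one invokes existence of Gauduchon metrics (for the degree-$1$ side, as in Lemma \ref{E1BC-closed}) or the balanced hypothesis (for the degree-$(n-1)$ side) to guarantee the separating functional is realized by an actual current rather than merely an abstract functional. I would isolate this as a lemma — "a class in the dual of the nef cone is pseudoeffective" — prove it once with the metric input made explicit, and then deduce all four items uniformly.
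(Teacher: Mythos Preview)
Your overall architecture matches the paper's: the easy inclusion $\mathcal N\subseteq\mathcal E^*$ plus a Hahn--Banach separation for the reverse, with the paper simply citing Lamari's Lemmes~1.3 and~1.4 for the current-theoretic core you describe. So strategically you are on the same track.

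However, your account of \emph{where} the balanced hypothesis enters is wrong in two places. First, the opening claim that balanced is needed to make the pseudoeffective cone \emph{salient} (contain no line) is a confusion: the bipolar identity $C^{**}=\overline C$ requires \emph{closedness}, not saliency, and indeed the paper uses balanced precisely to get $\mathcal E_{X,A}^1$ closed (for part~(iv)) by the argument of Lemma~\ref{E1BC-closed}. Second, your proposed mass bound ``$\int_X T\wedge\omega^{n-1}$ for $T$ a positive $(n-1,n-1)$-current'' is dimensionally impossible: that wedge has bidegree $(2n-2,2n-2)$. The balanced form $\omega^{n-1}$ (being $d$-closed) pairs cohomologically with $i\partial\bar\partial$-closed $(1,1)$-currents, which is exactly what shows $\mathcal E_{X,A}^1$ is closed; it does \emph{not} control $(n-1,n-1)$-currents, and the paper never claims $\mathcal E_{X,BC}^{n-1}$ is closed under the balanced hypothesis alone. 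For part~(iii) the balanced metric instead replaces the Gauduchon metric inside the Lamari-style separation argument itself (producing the strictly positive test form in the right degree), not via a closedness-of-$\mathcal E$ reduction. Fix these two points and your sketch becomes a correct expansion of the paper's proof.
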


\begin{proof}  The proof of the above statements either follows directly 
from \cite{lamari1}, or the arguments in \cite{lamari1} go through 
{\it{mutatis mutandis}}. For the convenience of the reader we include 
the details in the cases ii), iii) and iv) which are not covered by the results 
in \cite{lamari1}.
 
 \begin{itemize}
\item[  i)] This is Th\'eor\`eme 1.2 (1) in \cite{lamari1}.
\begin{footnote}{The cones ${\mathcal N}_{X,BC}^1$ and 
${\mathcal E}_{X,A}^{n-1}$ are the denoted by 
$P^1_{\rm nef}(X)$ and $\Pi^{n-1},$ respectively in  
\cite[Th\'eor\`eme 1.2 (1)]{lamari1}.}
\end{footnote}

\medskip

\item[ ii)] Clearly 
${\mathcal N}_{X,A}^{n-1}\subset ({\mathcal E}_{X,BC}^1)^*.$ 
Conversely, 
$({\mathcal E}_{X,BC}^1)^*\subset {\mathcal N}_{X,A}^{n-1}$ 
is equivalent to 
$({\mathcal N}_{X,A}^{n-1})^*\subset {\mathcal E}_{X,BC}^1$ 
since ${\mathcal E}_{X,BC}^1$ is closed. 
Let $[\eta]\in H^{1,1}_{BC}(X, {\mathbb R})$ be such 
that $([\eta],\gamma)\geq 0$, $\forall\gamma\in {\mathcal N}_{X,A}^{n-1}$. 
In particular, $(\eta, \Omega)\geq 0$ for any positive 
$i\partial\bar\partial$-closed $(n-1,n-1)$ form $\Omega$ on $X$. 
Lemme 1.4 in \cite{lamari1} implies the existence of a distribution $\chi$ 
such that $\eta+i\partial\bar\partial\chi\geq 0$, that is $[\eta]\in {\mathcal E}_{X,BC}^1$.

\medskip

\item[ iii)] 
The inclusion  
${\mathcal N}_{X,A}^1\subset({\mathcal E}_{X,BC}^{n-1})^*$
follows immediately.
For the opposite inclusion, we adapt the proof of 
Lemme 1.3 in \cite{lamari1} to our situation. 

\medskip

Let $\{\eta\}\in H^{1,1}_A(X, {\mathbb R})$ 
be an Aeppli cohomology class such that 
$(\{\eta\},\gamma)\geq 0$, $\forall \gamma\in {\mathcal E}_{X,BC}^{n-1}$, 
and $\eta\in {\mathcal C}^{\infty}_{1,1}(X, {\mathbb R})$ a representative.

\smallskip

We proceed by fixing a Hermitian metric on $X,$ with K\"ahler form $\phi.$
Let ${\mathcal D}'^{n-1, n-1}(X, {\mathbb R})$ denote the space of 
real currents of bidegree $(n-1,n-1)$ on $X,$ and define 
$$
C^{n-1}=\{T\in {\mathcal D}'^{n-1,n-1}(X, {\mathbb R})\vert T\geq 0, (T,\phi)=1\},
$$
which is a convex, compact set.

The set $V$ of all balanced metrics on $X$ is an open convex cone in 
$$
E=\{\lambda^{n-1}\in {\mathcal C}^{\infty}_{n-1,n-1}(X, {\mathbb R})\vert d\lambda^{n-1}=0\}.$$ 
We have $(\eta,\omega^{n-1})\geq 0$, $\forall\omega^{n-1}\in V$.
If $(\eta, \omega^{n-1})=0$, $\forall \omega^{n-1}\in V$, then 
$(\eta,\lambda^{n-1})=0$, $\forall\lambda^{n-1}\in E$ since $V$ is open in $E.$ 
From the duality between $H^{n-1,n-1}_{BC}(X, {\mathbb R})$ and $H^{1,1}_A(X, {\mathbb R})$ 
it follows that $\{\eta\}=0\in {\mathcal N}^1_{X,A}$.
We can therefore suppose that there exists $\omega_0^{n-1}\in V$ a balanced metric such that 
$(\eta, \omega_0)>0$. 
Let $D^{n-1}=C^{n-1}\cap E'$, where 
$$
E'=\{T\in {\mathcal D}'^{n-1,n-1}(X, {\mathbb R})\vert dT=0\}.$$ 
It is a convex, compact subset of ${\mathcal D}'^{n-1,n-1}(X, {\mathbb R})$ 
which is non-empty, as it contains the balanced metrics. Without loss of 
generality, we can assume that $\omega_0^{n-1}\in D^{n-1}$, i.e., that 
$(\omega_0^{n-1}, \phi)=1$. 

For $\varepsilon>0$, set 
$C(\varepsilon)=C^{n-1}+\varepsilon \omega_0^{n-1}$ and 
$D(\varepsilon)=D^{n-1}+\varepsilon \omega_0^{n-1}$. As $\omega_0^{n-1}$ is $d$-closed, we have 
$C(\varepsilon)\cap E'=D(\varepsilon)$.
Since $(\eta, T)\geq 0$, $\forall T\in D^{n-1}$ and 
$(\eta, \omega_0^{n-1})>0$, it follows that $(\eta, T)>0$, $\forall T\in D(\varepsilon)$. 
The subspace 
$$
F=E'\cap \{T\in {\mathcal D}'^{n-1,n-1}(X, {\mathbb R})\vert (\eta, T)=0\}
$$ 
is closed in ${\mathcal D}'^{n-1,n-1}(X, {\mathbb R})$ and of codimension $1$ in $E'$. Moreover, 
\begin{align*}
C(\varepsilon)\cap F=&~C(\varepsilon)\cap E'\cap \{T\in {\mathcal D}'^{n-1,n-1}(X, {\mathbb R})\vert (\eta, T)=0\}\\
=&~D(\varepsilon)\cap \{T\in {\mathcal D}'^{n-1,n-1}(X, {\mathbb R})\vert (\eta, T)=0\}\\
=&\emptyset.
\end{align*}
We can therefore separate $C(\varepsilon)$ and $F$ with a smooth $(1,1)$ form 
$\beta_{\varepsilon}$ which vanishes on $F$ and is strictly positive 
on $C(\varepsilon)$. If we let  
$\displaystyle \lambda_{\varepsilon}=
\frac {(\eta,\omega_0^{n-1})}{(\beta_{\varepsilon},\omega_0^{n-1})}$, then  
the $(1,1)$-form $\eta-\lambda_{\varepsilon}\beta_{\varepsilon}$ is zero on $E'.$
Therefore, from the duality between 
$H^{1,1}_A(X, {\mathbb R})$ and $H^{n-1,n-1}_{BC}(X, {\mathbb R})$, 
it follows that there exists $\gamma_{\varepsilon}$ a smooth $(1,0)$-form such that 
$$
\eta-\lambda_{\varepsilon}\beta_{\varepsilon}=
-\bar\partial\gamma_{\varepsilon}-\partial\bar\gamma_{\varepsilon}
$$ 
and the $(1,1)$-form 
$$
\eta+\bar\partial\gamma_{\varepsilon}+
\partial\bar\gamma_{\varepsilon}=\lambda_{\varepsilon}\beta_{\varepsilon}
$$ 
is strictly positive on $C(\varepsilon)$. If $T\in C^{n-1}$, then 
$T+\varepsilon\omega_0^{n-1}\in C(\varepsilon)$ and so
\begin{equation*}
(\eta+\bar\partial\gamma_{\varepsilon}+\partial\bar\gamma_{\varepsilon}, 
T+\varepsilon\omega_0^{n-1})=(\eta+\bar\partial\gamma_{\varepsilon}+
\partial\bar\gamma_{\varepsilon},T)+\varepsilon(\eta,\omega_0^{n-1})>0. 
\end{equation*}
Hence 
$(\eta+\bar\partial\gamma_{\varepsilon}+\partial\bar\gamma_{\varepsilon}, T)
> -\varepsilon (\eta,\omega_0^{n-1})$, $\forall T\in C^{n-1}$. 

Set now $m=(\eta,\omega_0^{n-1})$. If $T$ is a positive non-zero current of bidegree 
$(n-1,n-1)$ on $X$, then $\frac{1}{(T,\phi)}T\in C^{n-1}$, therefore 
$$(\eta+\bar\partial\gamma_{\varepsilon}+
\partial\bar\gamma_{\varepsilon},T)\geq -\varepsilon m (T,\phi),\, 
\forall T\geq 0
$$ 
which means 
$\eta+\bar\partial\gamma_{\varepsilon}+\partial\bar\gamma_{\varepsilon}
\geq -\varepsilon m \phi$. 
This implies that $\{\eta\}\in {\mathcal N}_{X,A}^1$.
 
\medskip
 
\item[ iv)] If $X$ is balanced, then 
${\mathcal E}_{X,A}^1$ 
is closed (see Lemma \ref{kah-bal-cones} below).
Clearly ${\mathcal N}_{X,BC}^{n-1}\subset ({\mathcal E}_{X,A}^1)^*$ 
and the other inclusion is equivalent to 
$({\mathcal N}_{X,BC}^{n-1})^*\subset {\mathcal E}_{X,A}^1$ since 
${\mathcal E}_{X,A}^1$ is closed. 
We adapt the proof of Lemme 1.4 in  \cite{lamari1} to our situation. 

Let $\{\theta\}\in H^{1,1}_A(X, {\mathbb R})$
be an Aeppli cohomology class such that 
$(\{\theta\},\gamma)\geq 0$, $\forall\gamma\in {\mathcal N}_{X,BC}^{n-1}$, 
and $\theta\in {\mathcal C}^{\infty}_{1,1}(X, {\mathbb R})$ a representative.
In particular, $(\theta,\omega^{n-1})\geq 0$, $\forall \omega^{n-1}\in V$, 
where $V\subset {\mathcal C}^{\infty}_{n-1,n-1}(X, {\mathbb R})$ 
is the cone of all balanced metrics on $X$. Assume there exists 
$\omega_0^{n-1}\in V$ such that 
$(\theta,\omega_0^{n-1})=0$. Let 
$\beta^{n-1}\in {\mathcal C}^{\infty}_{n-1,n-1}(X, {\mathbb R})$ 
be a $d$-closed $(n-1,n-1)$-form on $X$. Set 
$\omega_t^{n-1}=(1-t)\omega^{n-1}_0+t\beta^{n-1}$ and 
$f(t)=(\theta,\omega_t^{n-1})$. Then, there exists $\varepsilon>0$ 
such that $\omega_t^{n-1}\in V$ for $-\varepsilon\leq t\leq \varepsilon$. 
Therefore $f(-\varepsilon)\geq 0$, $f(\varepsilon)\geq 0$, $f(0)=0$, 
and it follows that $f\equiv0,$ 
and so $(\theta,\beta^{n-1})=0$, 
$\forall \beta^{n-1}\in {\mathcal C}^{\infty}_{n-1,n-1}(X, {\mathbb R})$, 
with $d\beta^{n-1}=0$. The duality between 
$H^{n-1,n-1}_{BC}(X, {\mathbb R})$ and $H^{1,1}_A(X, {\mathbb R})$ 
implies that $\{\theta\}=0\in {\mathcal E}_{X,A}^1$.

We can suppose now that $(\theta,\omega^{n-1})>0$, $\forall\omega^{n-1}\in V$. Set 
\begin{align*}
U=&~\{\lambda^{n-1}\in {\mathcal C}^{\infty}_{n-1,n-1}(X, {\mathbb R})\vert\lambda^{n-1} >0\}\\ 
E=&~\{\lambda^{n-1}\in {\mathcal C}^{\infty}_{n-1,n-1}(X, {\mathbb R})\vert d\lambda^{n-1}=0\}\\ 
F=&~\{\lambda^{n-1}\in E\vert (\theta,\lambda^{n-1})=0\}.
\end{align*} 
Then $U\cap E=V$ and $V\cap F=\emptyset,$ and hence $U\cap F=\emptyset$. 
By the Hahn-Banach theorem, we can separate $U$ and $F$ by a current 
$T$ of bidegree $(1,1)$ which is strictly positive on $U$ and vanishes on 
$F$. Then $T$ is a positive current. Let $\omega^{n-1}\in V$ and define 
$\displaystyle \lambda=\frac{(\theta,\omega^{n-1})}{(T,\omega^{n-1})}$. 
Then $\theta-\lambda T$ is zero on $E$ and from the duality between 
$H^{1,1}_A(X, {\mathbb R})$ and $H^{n-1,n-1}_{BC}(X, {\mathbb R})$ 
it follows that there exists $S$ a $(1,0)$-current on $X$ such that 
$$
\theta-\lambda T=-\bar\partial S-\partial\bar S.
$$ 
Hence, the current $\theta+\bar\partial S+\partial\bar S$ is positive and 
$\{\theta\}\in {\mathcal E}_{X,A}^1$.
\end{itemize}
\end{proof}

Let 
\begin{equation*}
\label{Kc}
\cal K_X=\{[\omega]\in H^{1,1}_{BC}(X,\RR)| ~\omega~\text{is a K\"ahler metric}\}
\end{equation*}
denote the K\"ahler cone of $X.$ Similarly, we define the balanced cone:
\begin{equation*}
\label{Kbc}
\cal B_X=\{[\omega]\in H^{n-1,n-1}_{BC}(X,\RR)| ~
\omega^{n-1}~\text{is a balanced metric}\}.
\end{equation*}

\begin{lemma}
\label{kah-bal-cones}
Let X be a compact complex manifold of dimension $n$.

\begin{itemize} 
\item[ i)] If $X$ is K\"ahler, then ${\mathcal N}_{X,BC}^1=\overline{\cal K}_X$. 
Moreover, ${\mathcal E}_{X,A}^{n-1}$
 is closed and we have,  
${\mathcal N}_{X,A}^{n-1}\subset {\mathcal E}^{n-1}_{X,A}.$
\item[ ii)] If $X$ is balanced, then ${\mathcal N}_{X,BC}^{n-1}=\overline{\cal B}_X$. 
Moreover, ${\mathcal E}_{X,A}^1$ is closed.
\end{itemize}
\end{lemma}

\begin{proof} The proof is an adaptation of the arguments in Lemma \ref{E1BC-closed}.

\begin{itemize}
\item [ i)] Since ${\cal K_X}\subset {\mathcal N}_{X,BC}^1$ and  
${\mathcal N}_{X,BC}^1$ is closed,
we can see that $\overline{\cal K }_X\subset {\mathcal N}_{X,BC}^1$. 
Conversely, fix $\omega$ a K\"ahler metric and let $\eta\in {\mathcal N}_{X,BC}^1$.
Then $\eta+t[\omega]\in {\cal K_X}$ for any $t>0$ and 
$\displaystyle\eta=\lim_{t\to 0}\eta+t[\omega]\in \overline{\cal K}_X$.
This proves that $\overline{\cal K_X}={\mathcal N}_{X,BC}^1.$

As in the proof of Lemma \ref{E1BC-closed},  we show that 
${\mathcal E}_{X,A}^{n-1}$ is closed and 
${\mathcal N}_{X,A}^{n-1}\subset {\mathcal E}^{n-1}_{X,A}$. 
Let $\omega$ be a K\"ahler metric on $X$ and $\eta \in \overline{{\mathcal E}_{X,A}^{n-1}}.$ 
Let $T_j$ positive $i\partial\bar\partial$-closed currents of 
bidegree $(n-1,n-1)$ such that $\{T_j\}\to\eta$ in $H_A^{n-1,n-1}(X, {\mathbb R}).$ 
 Then the sequence $(\int_XT_j\wedge\omega)_j$ is 
bounded, hence we can extract a subsequence $(T_{j_k})_k$ which is weakly 
convergent to a positive $i\partial\bar\partial$-closed current $T$ and $T\in \eta.$ 
Therefore  we have $\eta\in {\mathcal E}_{X,A}^{n-1}$. 
In order to prove the 
inclusion ${\mathcal N}_{X,A}^{n-1}\subset {\mathcal E}^{n-1}_{X,A}$, let 
$\eta\in {\mathcal N}_{X,A}^{n-1}.$ Then, by definition, 
$\eta+\varepsilon\{\omega^{n-1}\}\in {\mathcal E}_{X,A}^{n-1},$ 
and since ${\mathcal E}_{X,A}^{n-1}$ is closed, it follows that 
$\displaystyle \eta=\lim_{\varepsilon\to 0}\eta+\varepsilon\{\omega^{n-1}\}\in {\mathcal E}_{X,A}^{n-1}$.

\item [ ii)] We have ${\cal B_X}\subset {\mathcal N}_{X,BC}^{n-1}$ and, since 
${\mathcal N}_{X,BC}^{n-1}$ is closed, it follows that 
$\overline{\cal B}_X\subset {\mathcal N}_{X,BC}^{n-1}$. 
Conversely, fix $\omega^{n-1}$ a balanced metric on $X$ and let 
$\eta\in {\mathcal N}_{X,BC}^{n-1}$. 
Then $\eta+t[\omega^{n-1}]\in {\cal B_X}$ 
for any $t>0$ and therefore 
$\displaystyle \eta=\lim_{t\to 0}\eta+t[\omega^{n-1}]\in \overline{\cal B}_X$.

We show next that ${\mathcal E}_{X,A}^1$ is closed. 
Let $\omega^{n-1}$ be a fixed balanced metric on $X$ and
consider a sequence $(S_j)_j$ of  positive $i\partial\bar\partial$-closed 
currents of bidegree $(1,1)$ converging to 
$\eta\in H^{1,1}_A(X, {\mathbb R}).$
Then the sequence $(\int_XS_j\wedge\omega^{n-1})_j$ is bounded and so
there exists a subsequence $(S_{j_k})_k$ converging weakly to a 
positive $i\partial\bar\partial$-closed current $S$ of 
bidegree $(1,1).$ That means $\{S\}=\eta\in {\mathcal E}_{X,A}^1$, 
and so the cone ${\mathcal E}_{X,A}^1$ is closed. 
\end{itemize}
\end{proof}

\begin{rmk}
If $X$ is a K\a"ahler manifold, there exists a natural map
 $\varpi: \cal K\ra \cal B$ given by 
$\varpi([\omega])=[\omega^{n-1}].$ Fu and Xiao \cite{fx} showed 
that the map $p$ is injective \cite[Proposition 1.1]{fx}. Moreover, 
$p$ is not always surjective. More precisely, they provided 
examples of manifolds \cite[pages 11 and 12]{fx} 
where $\cal B_X\setminus \varpi(\cal K_X)\neq \emptyset.$
\end{rmk}

\bigskip

We have natural morphisms 
\begin{equation*}
j_1:H^{1,1}_{BC}(X, {\mathbb R})\to H^{1,1}_A(X, {\mathbb R})
\end{equation*}
and
\begin{equation*}
j_{n-1}:H^{n-1,n-1}_{BC}(X,{\mathbb R})\to H^{n-1,n-1}_A(X, {\mathbb R})
\end{equation*} 
which are isomorphisms if $X$ is K\"ahler, due to the 
$\partial\bar\partial$-lemma.

\medskip

 \begin{prop}
 Let $X$ be a compact K\"ahler manifold of dimension $n$. Then 
\begin{equation}
j_{n-1}({\mathcal E}_{X,BC}^{n-1})={\mathcal E}_{X,A}^{n-1}
\end{equation}
 and 
\begin{equation}
j_1({\mathcal N}_{X,BC}^1)={\mathcal N}_{X,A}^1
 \end{equation}
 \end{prop}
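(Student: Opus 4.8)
The plan is to show both claimed equalities by exhibiting the canonical maps $j_{n-1}$ and $j_1$ as isomorphisms on the relevant cones, using the $\partial\bar\partial$-lemma available on the K\"ahler manifold $X$. Since $X$ is K\"ahler, the maps $j_{n-1}:H^{n-1,n-1}_{BC}(X,\mathbb R)\to H^{n-1,n-1}_A(X,\mathbb R)$ and $j_1:H^{1,1}_{BC}(X,\mathbb R)\to H^{1,1}_A(X,\mathbb R)$ recorded just above are linear isomorphisms, so the content of the proposition is entirely about compatibility with the positivity structures, not about the underlying vector spaces.

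For the first equality, I would argue $j_{n-1}(\mathcal E^{n-1}_{X,BC})\subseteq\mathcal E^{n-1}_{X,A}$ by noting that a $d$-closed positive current $T$ of bidimension $(1,1)$ is in particular $i\partial\bar\partial$-closed, so its Bott--Chern class maps to its Aeppli class, which contains the same positive current $T$; hence any pseudoeffective Bott--Chern class maps into $\mathcal E^{n-1}_{X,A}$. For the reverse inclusion, start with a positive $i\partial\bar\partial$-closed current $S$ representing a class in $\mathcal E^{n-1}_{X,A}$. Here the $\partial\bar\partial$-lemma enters: on a K\"ahler manifold every $i\partial\bar\partial$-closed current is cohomologous, in the Aeppli sense, to a $d$-closed current; more precisely, one writes $S = T + \partial u + \bar\partial \bar u$ for a suitable current and a $d$-closed $T$. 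The subtlety is that this correction need not preserve positivity, so I would instead use harmonic theory: pick the harmonic representative, or invoke the regularization/decomposition results for positive currents on K\"ahler manifolds (Demailly's regularization, or directly the fact that the $\partial\bar\partial$-lemma holds at the level of currents and that $H^{n-1,n-1}_A$-classes of positive currents coincide with $H^{n-1,n-1}_{BC}$-classes under $j_{n-1}$ because $j_{n-1}$ is already known to be an isomorphism of the full cohomology groups and the preimage class is represented by a $d$-closed current in the same de Rham class, which by Demailly's result can be taken positive if the original was). I expect the cleanest route is: since $j_{n-1}$ is a vector-space isomorphism, $j_{n-1}(\mathcal E^{n-1}_{X,BC})$ is a closed convex cone in $H^{n-1,n-1}_A$, and it suffices to check it contains $\mathcal E^{n-1}_{X,A}$; every class of $\mathcal E^{n-1}_{X,A}$ has a positive current representative $S$, and $j_{n-1}^{-1}([S]_A)$ is a Bott--Chern class whose de Rham image equals $[S]_{dR}$, which is pseudoeffective in de Rham cohomology, hence (K\"ahler case) the Bott--Chern class is pseudoeffective by the remark identifying $\mathcal E^1_{X,BC}$ — applied here in complementary bidegree — with the de Rham pseudoeffective cone.

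For the second equality, $j_1(\mathcal N^1_{X,BC})=\mathcal N^1_{X,A}$, I would use the nef characterization directly: a class $[\alpha]_{BC}$ is nef iff for every $\varepsilon>0$ there is a smooth form $\alpha_\varepsilon$ in the Bott--Chern class with $\alpha_\varepsilon\geq -\varepsilon\omega$. Pushing forward under $j_1$ only enlarges the set of representatives (Bott--Chern equivalence implies Aeppli equivalence), so $j_1(\mathcal N^1_{X,BC})\subseteq\mathcal N^1_{X,A}$ is immediate. For the reverse, given $[\beta]_A$ Aeppli-nef, use the $\partial\bar\partial$-lemma to replace each approximant: an Aeppli class in bidegree $(1,1)$ contains a $d$-closed form (again K\"ahler), and the $\varepsilon$-approximants $\beta_\varepsilon\geq -\varepsilon\omega$ can be averaged/corrected to $d$-closed forms via $i\partial\bar\partial$-potentials without destroying the lower bound, because the correction term $i\partial\bar\partial\varphi_\varepsilon$ is exact and one can choose $\varphi_\varepsilon$ so that the new form stays $\geq -2\varepsilon\omega$; concretely this is the standard fact that on a K\"ahler manifold the Bott--Chern and de Rham nef cones of $(1,1)$-classes coincide, combined with $j_1$ being an isomorphism.

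The main obstacle is the positivity-preserving step in both reverse inclusions: a vector-space isomorphism of cohomology groups does not automatically identify the cones, so one must genuinely produce positive (resp.\ $\varepsilon$-positive) representatives in the Bott--Chern class starting from Aeppli data. For the pseudoeffective cone this is handled by the remark in the excerpt identifying the $BC$-pseudoeffective cone with the de Rham pseudoeffective cone on K\"ahler manifolds (so one reduces to the de Rham picture where positive currents are representatives of both), and for the nef cone by the analogous identification $\mathcal N^1_{X,BC}=\overline{\mathcal K_X}$ together with the $\partial\bar\partial$-lemma allowing smooth $\varepsilon$-positive representatives to be chosen $d$-closed. Once those identifications are in hand, both equalities follow formally from $j_1$ and $j_{n-1}$ being isomorphisms.
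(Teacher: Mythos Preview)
Your proposal has a genuine gap at the key step, namely the reverse inclusion $\mathcal E^{n-1}_{X,A}\subseteq j_{n-1}(\mathcal E^{n-1}_{X,BC})$. The $\partial\bar\partial$-lemma only tells you that an $i\partial\bar\partial$-closed positive $(n-1,n-1)$-current $S$ is Aeppli-cohomologous to some $d$-closed current $T$; it gives you no control whatsoever on the positivity of $T$, and none of the remedies you list actually produces a positive $d$-closed representative. Your ``cleanest route'' invokes ``the remark identifying $\mathcal E^1_{X,BC}$ with the de Rham pseudoeffective cone, applied in complementary bidegree'', but that remark is a tautology (both cones consist of $d$-closed classes admitting a positive $d$-closed representative, compared via the isomorphism $J$); it says nothing about classes that only admit a positive $i\partial\bar\partial$-closed representative. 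Indeed, the analogous equality in bidegree $(1,1)$, namely $j_1(\mathcal E^1_{X,BC})=\mathcal E^1_{X,A}$, is \emph{open} for general K\"ahler manifolds (it is equivalent by duality to the conjecture mentioned in the paper's remark following this proposition), so no soft argument of the type you sketch can work.

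The paper's proof supplies the missing hard input: by Demailly--P\u aun \cite{dempau}, the cone $\mathcal E^{n-1}_{X,A}$ (being the dual of $\mathcal N^1_{X,BC}=\overline{\mathcal K_X}$) is generated by the classes of currents $\int_Y\omega^{p-1}\wedge\,\cdot\,$ for analytic subsets $Y\subset X$ and K\"ahler forms $\omega$. These generators are visibly $d$-closed and positive, hence lie in $j_{n-1}(\mathcal E^{n-1}_{X,BC})$, and the inclusion follows. The second equality $j_1(\mathcal N^1_{X,BC})=\mathcal N^1_{X,A}$ is then obtained \emph{by duality} from the first via Theorem~\ref{duality}, not by a direct $\varepsilon$-approximation argument. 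Your direct approach to the nef equality also fails as written: the Aeppli correction needed to make an $i\partial\bar\partial$-closed form $d$-closed is of the form $\partial u+\bar\partial\bar u$, not $i\partial\bar\partial\varphi$, and there is no mechanism to choose it preserving a lower bound $\geq -2\varepsilon\omega$.
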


\begin{proof} The second statement follows from the first 
one by duality. From Theorem \ref{duality}, we have that 
$({\mathcal N}_{X,BC}^1)^*={\mathcal E}_{X,A}^{n-1}$ 
since ${\mathcal E}_{X,A}^{n-1}$ is closed. Corollary 0.3 
in \cite{dempau} implies that the currents of the form 
$j_{n-1}([\int_Y\omega^{p-1}\wedge \bullet ]),$ where $Y$ 
is a $p$-dimensional analytic subset of $X$ and $\omega$ 
is a K\"ahler metric on $X,$ generate the cone 
${\mathcal E}_{X,A}^{n-1}$. Since the currents 
$\left(\int_Y\omega^{p-1}\wedge \bullet\right)$ are 
$d$-closed and positive, we see that 
$j_{n-1}({\mathcal E}_{X,BC}^{n-1})={\mathcal E}_{X,A}^{n-1}.$ 
\end{proof}

\begin{rmk}
Given a compact complex K\"ahler manifold of dimension $n,$ 
Conjecture 2.3 in \cite{boucksom} implies that
$j_{n-1}({\mathcal N}_{X,BC}^{n-1})={\mathcal N}_{X,A}^{n-1},$ 
i.e., that the dual of the pseudoeffective cone 
${\mathcal E}_{X,BC}^1$ is the closure of the cone of classes of balanced 
metrics.
\end{rmk}

\subsection{N\'eron-Severi groups}
\label{nsg}

For a compact complex manifold $X$ of dimension $n$ 
we have natural maps 
\begin{equation*}
\alpha_p:H_{BC}^{p,p}(X,{\mathbb R})\to H^{2p}_{dR}(X,{\mathbb R}),
\end{equation*}
\begin{equation*}
\beta_p:H^{2p}_{dR}(X,{\mathbb R})\to H^{p,p}_A(X,{\mathbb R}),
\end{equation*}
\begin{equation*} 
\gamma_p:H^{2p}(X,{\mathbb Z})\to H^{2p}_{dR}(X,{\mathbb R}).
\end{equation*}

Define the N\'eron-Severi groups
\begin{equation*} 
H^{p,p}_{BC,NS}(X,{\mathbb R})=
\alpha_p^{-1}(\gamma_p(H^{2p}(X,{\mathbb Z})))
\otimes_{\mathbb Z}{\mathbb R}\subset H^{p,p}_{BC}(X,{\mathbb R})
\end{equation*}
 and 
\begin{equation*}
H^{p,p}_{A,NS}(X,{\mathbb R})=
\beta_p(\gamma_p(H^{2p}(X,{\mathbb Z})))
\otimes_{\mathbb Z}{\mathbb R}\subset H^{p,p}_A(X,{\mathbb R}).
\end{equation*}

If $X$ is projective, then the canonical morphisms 
\begin{equation*}
H^{1,1}_{BC,NS}(X,{\mathbb R})\to H^{1,1}_{A,NS}(X,{\mathbb R})
\end{equation*}
and
\begin{equation*}
H^{n-1,n-1}_{BC,NS}(X,{\mathbb R})\to H^{n-1,n-1}_{A,NS}(X,{\mathbb R}).
\end{equation*}
are isomorphisms, and the standard notation for these groups 
are $N^1$ or $NS^1_X,$ and $N_1,$ respectively. The group $NS^1_X$ 
is generated by classes of divisors on $X$, and by the Hard Lefschetz 
Theorem, it follows that $N_1$ is generated by classes of curves on $X$.

\medskip

Let the subscript $NS$ denote the intersection of a cone 
(nef or pseudoeffective) with the N\'eron-Severi groups.

 \begin{prop}
 \label{neronseveri}
 If $X$ is compact K\"ahler of dimension $n$, then the pairing 
\begin{equation}
\label{pairing}
H^{p,p}_{BC,NS}(X,{\mathbb R})\times 
H^{n-p,n-p}_{A,NS}(X,{\mathbb R})\to {\mathbb R}, 
([\alpha],\{\beta\})\to\int_X\alpha\wedge\beta
\end{equation} 
is nondegenerate and all the equalities of Theorem \ref{duality}
hold at the N\'eron-Severi level. Moreover, 
\begin{equation*}
j_{n-1}({\mathcal E}^{n-1}_{BC,NS})={\mathcal E}^{n-1}_{A,NS}
\end{equation*}
 and 
\begin{equation*}
j_1({\mathcal N}^1_{BC,NS})={\mathcal N}^1_{A,NS}.
\end{equation*}
 If $X$ is projective, then 
\begin{equation}
\label{2.11}
j_{n-1}({\mathcal N}_{BC,NS}^{n-1})={\mathcal N}_{A,NS}^{n-1}
\end{equation}
 and 
\begin{equation}
\label{2.12}
j_1({\mathcal E}_{BC,NS}^1)={\mathcal E}_{A,NS}^1.
 \end{equation}
 \end{prop}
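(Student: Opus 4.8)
The plan is to prove Proposition~\ref{neronseveri} by first establishing the nondegeneracy of the pairing~(\ref{pairing}), then deriving the $NS$-level versions of Theorem~\ref{duality} by intersecting with rational subspaces, and finally proving the four displayed equalities, the last two~(\ref{2.11}) and~(\ref{2.12}) being the substantive points. For nondegeneracy, I would argue that on a compact K\"ahler manifold the $\partial\bar\partial$-lemma identifies $H^{p,p}_{BC}(X,\RR)$, $H^{p,p}_A(X,\RR)$ and the Hodge component $H^{p,p}(X)\cap H^{2p}_{dR}(X,\RR)$; under this identification the pairing $([\alpha],\{\beta\})\mapsto\int_X\alpha\wedge\beta$ is the usual Poincar\'e pairing, which is perfect, and it restricts to a perfect pairing between $H^{p,p}_{BC,NS}$ and $H^{n-p,n-p}_{A,NS}$ because both are defined as the rational structure $\alpha_p^{-1}(\gamma_p(H^{2p}(X,\ZZ)))\otimes\RR$, respectively its image, and the cup product pairs $H^{2p}(X,\QQ)\cap(p,p)$ with $H^{2n-2p}(X,\QQ)\cap(n-p,n-p)$ nondegenerately (this uses the Hodge decomposition being defined over $\RR$ and compatibility of cup product with the Hodge filtration).

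Next I would obtain the $NS$-level duality. Since all cones in sight are rational polyhedral-type cones (or at least are cut out inside the $NS$ subspace, which is defined over $\QQ$), the general principle that $(C\cap W)^* = \overline{C^* + W^\perp}$ combined with the fact that $W=H^{p,p}_{\#,NS}$ is a rational subspace and the global duality statements of Theorem~\ref{duality} lets one transfer each equality ${\mathcal N}=({\mathcal E})^*$ to its intersection with the N\'eron--Severi groups: one checks that the face of the dual cone picked out by pairing against $NS$-classes is exactly the $NS$-part of ${\mathcal N}$, using that a positive current whose class lies in the $NS$ subspace can be taken, after averaging/modifying by coboundaries, to represent the $NS$ class, and that the nondegeneracy just proved makes $W^\perp$ behave well. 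The statements $j_{n-1}({\mathcal E}^{n-1}_{BC,NS})={\mathcal E}^{n-1}_{A,NS}$ and $j_1({\mathcal N}^1_{BC,NS})={\mathcal N}^1_{A,NS}$ then follow by restricting the corresponding statements already proved in the Proposition preceding this one (the surjectivity $j_{n-1}({\mathcal E}^{n-1}_{X,BC})={\mathcal E}^{n-1}_{X,A}$ via \cite{dempau}, and its dual), observing that the generating currents $\int_Y\omega^{p-1}\wedge\bullet$ for $Y$ an analytic subset have classes in the N\'eron--Severi subspace since $[Y]\in H^{2(n-p)}(X,\ZZ)$.

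For the projective case, the key input is that the nef and pseudoeffective cones inside $N^1$ and $N_1$ are the classical algebraic cones, and the classical duality $\overline{\mathrm{Nef}}^1 = (\overline{\mathrm{Eff}}_1)^\vee$ (Kleiman) together with its $(n-1)$-dimensional counterpart. Concretely, to prove~(\ref{2.11}) I would show $j_{n-1}$ maps ${\mathcal N}^{n-1}_{BC,NS}$ onto ${\mathcal N}^{n-1}_{A,NS}$: the inclusion $\subseteq$ is clear, and for $\supseteq$ I would use that $j_1({\mathcal E}^1_{BC,NS})={\mathcal E}^1_{A,NS}$ (which is~(\ref{2.12})) together with the $NS$-level duality ${\mathcal N}^{n-1}_{A,NS}=({\mathcal E}^1_{BC,NS})^*$ and ${\mathcal N}^{n-1}_{BC,NS}=({\mathcal E}^1_{A,NS})^*$ to see both sides are duals of cones identified by $j_1$. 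Thus~(\ref{2.11}) and~(\ref{2.12}) are genuinely equivalent by duality, and it suffices to prove~(\ref{2.12}): that every $i\partial\bar\partial$-closed positive $(1,1)$-current with $NS$ class is Aeppli-cohomologous, within $NS$, to a $d$-closed positive one, i.e.\ to a pseudoeffective $\RR$-divisor class. This is where the projectivity is used essentially: an $i\partial\bar\partial$-closed positive current of integral Aeppli class is, up to an Aeppli coboundary $\partial\beta+\bar\partial\gamma$, realized by an effective $\QQ$-divisor, because on a projective manifold the relevant Aeppli--N\'eron--Severi group is spanned by divisor classes and the positive cone is the pseudoeffective cone of divisors (via Boucksom's divisorial Zariski decomposition or directly via \cite{boucksom}). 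I expect this last point --- upgrading an analytic positivity statement in Aeppli cohomology to the algebraic pseudoeffectivity of a divisor class, and doing so within the N\'eron--Severi subspace rather than the full cohomology --- to be the main obstacle, since it requires carefully matching the current-theoretic cones with the algebraic cones and invoking that the $\partial\bar\partial$-lemma plus projectivity force the $NS$ part of ${\mathcal E}^1_A$ to coincide with $\overline{\mathrm{Eff}}^1$.
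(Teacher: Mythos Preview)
Your overall architecture matches the paper's: the nondegeneracy of the pairing and the first two equalities are routine consequences of the $\partial\bar\partial$-lemma and the preceding proposition, and you correctly identify that~(\ref{2.11}) and~(\ref{2.12}) are equivalent by duality, so that only~(\ref{2.12}) requires work. The paper says exactly this: ``The only non-trivial statement is~(\ref{2.12}), as~(\ref{2.11}) follows by duality.''

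The gap is in your proof of~(\ref{2.12}). You want to show that if $T$ is a positive $i\partial\bar\partial$-closed current whose Aeppli class lies in $H^{1,1}_{A,NS}$, then the corresponding Bott--Chern class $[S]$ with $j_1([S])=\{T\}$ is pseudoeffective. Your proposed justification --- that ``on a projective manifold the relevant Aeppli--N\'eron--Severi group is spanned by divisor classes and the positive cone is the pseudoeffective cone of divisors'' --- is essentially a restatement of the claim to be proved; and invoking Boucksom's divisorial Zariski decomposition does not help, since that machinery \emph{decomposes} a class already known to be pseudoeffective rather than certifying pseudoeffectivity.

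The paper's argument, following Toma~\cite{toma}, supplies the missing mechanism in two steps. First, by \cite[Theorem~2.2]{boucksom}, pseudoeffectivity of $[S]\in H^{1,1}_{BC,NS}$ is \emph{characterized} by nonnegativity of the pairing against every movable class $p_*(A_1\cap\cdots\cap A_{n-1})$, where $p:Y\to X$ is a proper modification and the $A_i$ are very ample on $Y$. Second, by Alessandrini--Bassanelli \cite[Theorem~3]{alessandrini2}, the positive pluriharmonic current $T$ admits a \emph{total transform} $T'$ on $Y$, still positive and $i\partial\bar\partial$-closed, in the pulled-back Aeppli class. One then computes
\[
([S],\{p_*(A_1\cap\cdots\cap A_{n-1})\})=(\{T\},[p_*(A_1\cap\cdots\cap A_{n-1})])=(T',A_1\cap\cdots\cap A_{n-1})\geq 0,
\]
the last inequality holding because $T'$ is positive and the intersection of very ample divisors is represented by a positive closed current. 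This is the concrete bridge from Aeppli-positivity to algebraic pseudoeffectivity that your sketch is missing.
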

 \begin{proof}
The only non-trivial statement is (\ref{2.12}), as  (\ref{2.11})  
follows by duality.

Let $\{T\}\in {\mathcal E}_{A,NS}^1$ where $T$ is a positive, 
$\partial\bar\partial$-closed current, and let 
$j_1([S])=\{T\}$, $[S]\in H^{1,1}_{BC,NS}(X,{\mathbb R})$. 
We want to show that $[S]\in {\mathcal E}_{BC,NS}^1.$ For the 
proof, we follow \cite{toma}.

From \cite[Theorem 2.2]{boucksom} we see
that it is enough to check that 
$$
([S],\{p_*(A_1\cap \ldots \cap A_{n-1})\})\geq 0,
$$
where $p:Y\to X$ is a proper modification of $X$ and 
$A_1,\ldots,A_{n-1}$ are  
very ample line bundles on $Y$. 
However, from Theorem $3$ in \cite{alessandrini2}, there exists $T'$, 
a positive pluriharmonic current on $Y$ which is the total transform 
of $T$, and we have
\begin{align*}
([S],\{p_*(A_1\cap\ldots\cap A_{n-1})\})&=(\{T\},[p_*(A_1\cap\ldots\cap A_{n-1}])\\
&=(T',A_1\cap\ldots \cap A_{n-1})\\
&\geq 0.
\end{align*}
\end{proof}

\begin{rmk}
\label{psefbcdra}
Boucksom, Demailly, P\u aun and Peternell define 
\cite[Definition 1.1]{boucksom} the pseudoeffective cone 
${\mathcal E}_{NS}$ as ${\cal E}_{X, dR}^{1}
\cap NS_{\mathbb R}(X)$, 
where  
$$
NS_{\mathbb R}(X)=(H^{1,1}_{\mathbb R}(X)\cap 
H^2(X, {\mathbb Z})/{\rm torsion})
\otimes_{\mathbb Z}{\mathbb R}.
$$
Formula (\ref{2.12}) above implies in particular that, 
at the N\'eron-Severi level, the pseudoeffective cones 
${\mathcal E}_{BC,NS}^1,~{\mathcal E}_{A,NS}^1$ and 
${\mathcal E}_{NS}$ coincide via the canonical isomorphisms 
between the cohomology groups $H^{1,1}_{BC,NS}
(X, {\mathbb R}),~ 
H^{1,1}_{A,NS}(X, {\mathbb R})$, 
and $NS_{\mathbb R}(X)$. 
\end{rmk}

\section{Uniruled manifolds and balanced metrics}
\label{metrics}

\subsection{Bimeromorphism invariance}
\label{bimero}

We prove here that the existence of a balanced metric of 
positive total scalar Chern curvature is an invariant property 
under bimeromorphisms.

\medskip

\begin{proof}[Proof of Theorem \ref{bi-invariance}]
By \cite{wft}, we can assume that $p:Y\to X$ is a blow-up with 
smooth center $C$ and let $E$ be the exceptional divisor of $p.$ 
Then $K_Y=p^*K_X+aE$, where $a=\codim_XC-1>0.$

\medskip

Suppose first that $X$ admits a balanced metric $\omega_X^{n-1}$ 
which is negative on the canonical line bundle of $X$. 
Let $i:E\to Y$ denote the inclusion. Since 
$$
\int_Y c_1(E)\wedge p^*\omega_X^{n-1}=\int_Ei^*p^*\omega_X^{n-1}=\int_C\omega_X^{n-1}=0,
$$ 
we find that  
$$
\int_Yc_1(K_Y)\wedge p^*\omega_X^{n-1}
=\int_Yc_1(p^*K_X)\wedge p^*\omega_X^{n-1}
=\int_Xc_1(K_X)\wedge \omega_X^{n-1}<0.
$$ 
It is known that $Y$ is also balanced \cite{alessandrini4}, 
and if $\omega_Y^{n-1}$ is a balanced metric on $Y$, then 
$p^*\omega_X^{n-1}+\varepsilon \omega_Y^{n-1}$ is a 
balanced metric and 
$$
\int_Yc_1(K_Y)\wedge (p^*\omega_X^{n-1}+\varepsilon \omega_Y^{n-1})<0,
$$ 
for a small $\varepsilon >0$.

Conversely, suppose that $Y$ supports a balanced metric 
$\omega_Y^{n-1}$ such that 
\begin{equation}
\label{negativeY}
\int_Yc_1(K_Y)\wedge \omega_Y^{n-1}<0.
\end{equation}
and suppose that
\begin{equation*}
\int_Xc_1(K_X)\wedge\omega^{n-1}\geq 0
\end{equation*}
for any balanced metric $\omega^{n-1}$ on $X$. Then
\begin{equation*}
\int_Xc_1(K_X)\wedge \eta\geq 0
\end{equation*}
for any class $[\eta]\in {\cal N}_{BC,X}^{n-1}$. Therefore, 
by Theorem \ref{duality} iv), $\{c_1(K_X)\}\in {\cal E}_{X,A}^1,$ i.e., 
there exists $T$ a positive $\partial\bar\partial$-closed 
$(1,1)$-current in the Aeppli cohomology class $\{c_1(K_X)\}$. 
From \cite{alessandrini2}, it follows that there exists  
a positive $\partial\bar\partial$-closed current on $Y$ denoted 
by $T',$ which is the total transform of $T.$ This means that 
$$
T'\in \{c_1(p^*K_X)\}=p^*\{c_1(K_X)\}.
$$ 
In particular, 
$\{c_1(p^*K_X)\}\in {\cal E}_{Y,A}^1$ and therefore 
$$
\{c_1(K_Y)\}=\{c_1(p^*K_X)\}+a\{[E]\}\in {\cal E}_{Y,A}^1
$$ 
which contradicts (\ref{negativeY}).
\end{proof}

\subsection{Metrics on Mori fiber spaces}
\label{mfs}

We start by recalling background definitions from the minimal 
model program. 

\medskip

\begin{defn}
A compact complex variety $Y$ is called $\QQ-$factorial if 
every Weil divisor of $Y$ is $\QQ-$Cartier.
\end{defn}

Let $Y$ be normal variety such that $mK_Y$ is Cartier for some 
$m>0,$ and let $f:Z\ra Y$ be a resolution of singularities.  Up to
numerical equivalence, we can write 
$$
K_Z\equiv_{\QQ}f^*(K_Y)+\sum_i a_iE_i,
$$
where the $E_i$'s are the $f-$exceptional divisors, and $a_i\in \QQ.$

\begin{defn}
We say that $Y$ has log-terminal singularities if $a_i>-1,$ for all $i.$
\end{defn}
It is well-known that this definition is independent of the choice of 
the resolution \cite{km}.

\begin{defn}
A normal compact complex variety $Y$ with only $\QQ-$factorial 
log-terminal singularities equipped with a map $\phi:Y\ra B$ is 
called a Mori fiber space if the following conditions are satisfied:
\begin{itemize}
\item[ i)] The map $\phi$ is a morphism with connected fibers onto 
a normal variety $B$ with $\dim B<\dim Y.$

\item[ ii)] All the curves $C$ in the fibers of $\phi$ are numerically 
proportional and $K_Y\cdot C<0.$
\end{itemize}
\end{defn}

\subsubsection{The projective case}

We give here a first proof of Theorem \ref{thmA-proj} based on the 
the minimal model program. A second proof, circumventing the 
minimal model program follows.

\begin{prop}
\label{metric-mfs}
Let $\phi:Y\ra B$ be a Mori fiber space, with $Y$ and $B$ projective. 
Then, there exists an ample line bundle $H$ on $Y$ such that 
$$
K_Y\cdot H_Y^{n-1}<0.
$$
\end{prop}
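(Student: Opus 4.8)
The plan is to exploit the defining properties of a Mori fiber space $\phi:Y\to B$ together with ampleness to produce a line bundle $H$ with the required negativity against $K_Y$. First I would fix an ample line bundle $A$ on the base $B$ and an ample line bundle $H_0$ on $Y$, and consider the family $H_m := H_0 + m\phi^*A$ for $m\gg 0$; each such $H_m$ is ample on $Y$. The point is that for a generic complete intersection curve $C$ cut out by $n-1$ members of $|kH_m|$ (for $k$ divisible enough), $C$ is a curve contained in a fiber of $\phi$ when $m$ is large relative to $H_0$, or more precisely $H_m^{n-1}$ concentrates, as a class, along the fibral directions. By condition (ii) in the definition of a Mori fiber space, every curve $C$ in a fiber satisfies $K_Y\cdot C<0$, and all fibral curves are numerically proportional, so the sign of $K_Y\cdot H_m^{n-1}$ is governed entirely by the fibral part.

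More carefully, I would argue at the level of $1$-cycles in $N_1(Y)$. Write $H_m^{n-1}$ as a numerical class of curves; its pushforward $\phi_*(H_m^{n-1})$ to $N_1(B)$ lies in the span of $A^{\dim B}$ times a positive constant (the relative dimension being $\dim Y - \dim B =: r$), so the "horizontal" contribution of $H_m^{n-1}$ is, after dividing out, bounded, whereas the component of $H_m^{n-1}$ lying in the kernel of $\phi_*$ — the fibral part — grows. Concretely, expand $H_m^{n-1} = (H_0+m\phi^*A)^{n-1}$ by the binomial theorem; since $(\phi^*A)^{\dim B + 1} = 0$, the top surviving power of $m$ is $m^{\dim B}$, with coefficient a positive multiple of $(\phi^*A)^{\dim B}\cdot H_0^{r-1}$, which is represented by a fibral curve class $[C_0]$ (a complete intersection of hyperplane sections inside a general fiber). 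Then
\[
K_Y\cdot H_m^{n-1} = c\, m^{\dim B}(K_Y\cdot C_0) + O(m^{\dim B - 1}),
\]
with $c>0$ and $K_Y\cdot C_0<0$ by the Mori fiber space condition. Hence $K_Y\cdot H_m^{n-1}<0$ for all $m$ sufficiently large, and we take $H:=H_m$ for such an $m$.

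The main obstacle I anticipate is making the intersection-theoretic bookkeeping fully rigorous in the possibly singular setting: $Y$ is only $\QQ$-factorial with log-terminal singularities, so $K_Y$ is a $\QQ$-Cartier divisor and one must work with $\QQ$-divisors, replacing $H$ by a suitable multiple to clear denominators, and one must ensure the generic complete intersection curves avoid the singular locus (which holds by Bertini since $\dim\Sing Y$ is small and the linear systems are very ample after passing to a multiple). One also needs that the leading coefficient $(\phi^*A)^{\dim B}\cdot H_0^{r-1}$ is genuinely positive and represented by an \emph{effective} fibral curve, which follows from choosing the hyperplane sections generically and restricting to a general fiber $F$ (a projective variety of dimension $r$), where $H_0|_F$ is ample so $H_0^{r-1}|_F$ is an effective curve class, necessarily fibral and hence numerically proportional to the extremal curve with negative canonical degree. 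Once these technical points are in place, the sign computation above finishes the proof.
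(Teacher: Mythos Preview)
Your argument is correct and follows essentially the same approach as the paper: fix ample $H_0$ on $Y$ and $A$ on $B$, set $H_m=H_0+m\phi^*A$, expand $K_Y\cdot H_m^{n-1}$ binomially using $(\phi^*A)^{\dim B+1}=0$, and observe that the leading coefficient in $m$ is a negative multiple of a fibral intersection number (the paper phrases this as $K_F\cdot H_0|_F^{\,n-1-b}<0$ via ampleness of $-K_F$, you phrase it via condition (ii) directly). The paper treats the case $\dim B=0$ separately but your formulation absorbs it, and the singularity/Bertini concerns you raise are not obstacles since the computation takes place in $N_1(Y)$ with $\QQ$-Cartier divisors.
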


\begin{proof} If $\dim B=0,$ by Kleiman's Ampleness Criterion $-K_Y$ 
is ample, and so $K_Y\cdot H^{n-1}<0$ for all ample line bundles on $Y.$

Assume now that $\dim B=b>0$ and fix an ample line bundle 
$L$ on $B,$ and $H_0$ an ample line bundle on $Y.$ Let 
$$
H_m=m\phi^*L+H_0.
$$
Then $H_m$ is an ample line bundle on $Y$ for all $m>0,$ and 
\begin{align*}
K_Y\cdot H_m^{n-1}~=~&K_Y\cdot (m\phi^*L+H_0)^{n-1}\\
~=~&c(n,b)m^bK_Y\cdot (\phi^*L)^b\cdot H_0^{n-1-b} +O(m^{b-1})\\
~=~&c(n,b)m^b(L^b)(K_F\cdot H_0^{n-1-b}) +O(m^{b-1}),
\end{align*}
where $c(n,b)$ is a positive integer depending only on $n$ and $b,$ 
and $F$ denotes the fiber of $\phi.$ By the relative version of 
Kleiman's Ampleness Criterion \cite[Theorem 1.44]{km} 
we see that $-K_F$ is ample, and so 
$K_Y\cdot H_m^{n-1}<0$ for $m\gg0.$ Take now $H_Y=H_m$ 
for some fixed $m\gg 0.$
\end{proof}

\begin{proof}[The first proof of Theorem \ref{thmA-proj}]
Let $X$ be a smooth, Moishezon, uniruled manifold of dimension 
$n>0.$ Then there exists a smooth projective manifold $Y$ of 
dimension $n$ bimeromorphic to $X.$ Since uniruledness is preserved 
under bimeromorphic transformations, $Y$ is uniruled.
According to \cite[Theorem IV.1.9]{kollar}, there exists 
a non-constant holomorphic map $u:\PP_1\ra Y,$ such 
that $u^*T_Y$ is globally generated. Since we have an injection 
from $\OO_{\PP^1}(2)=T_{\PP^1}$ to $u^*T_Y,$ it follows that 
$\deg u^*T_Y\geq 2,$ and so $K_Y\cdot u(\PP_1)<0.$ But 
the curve $u(\PP_1)$ moves in a family covering $X,$ and so  
by \cite[Theorem 0.2]{boucksom},  the canonical bundle 
$K_Y$ is not pseudoeffective. This implies, according to 
\cite[Corollary 1.3.3]{bchm}, that  $Y$ is birational to a Mori fiber space 
$\phi: Z\ra B$ with $Z$ and $B$ projective. In general, $Z$ is not 
smooth, and let $f:\hat Z\ra Z$ be a desingularization. Then, there 
exists an ample line bundle $H_{\hat Z}$ on ${\hat Z}$ such that 
$$
K_{\hat Z}\cdot H_{\hat Z}^{n-1}<0.
$$
Indeed, from Proposition \ref{metric-mfs}, we know that 
there exists an ample line bundle $H_Z$ on $Z$ such that 
$K_Z\cdot H_Z^{n-1}<0.$ Fix $H_0$ be an ample line bundle 
on ${\hat Z}.$ For every $m>0,$ let 
$$
H_m=mf^*H_Z+H_0
$$ 
Then $H_m$ is an ample line bundle on $\hat Z,$ and 
\begin{align*}
K_{\hat Z}\cdot H_m^{n-1}~=
~&~(f^*K_Z+\sum_i a_iE_i)\cdot (mf^*H_Z+H_0)^{n-1}\\
~=~&m^{n-1}K_Z\cdot H_Z^{n-1}+O(m^{n-2})<0,
\end{align*}
for $m$ sufficiently large. Take now 
$H_{\hat Z}=H_m$ for fixed $m\gg 0.$ 
Since $H_{\hat Z}$ is ample, the first Chern class 
of $kH_m$ is represented by the K\"ahler form of 
a Hodge metric $\omega_{\hat Z}$ for sufficiently 
large $k.$ In particular, we found on $\hat Z$ a K\"ahler 
metric $\omega$ such that 
$$
\int_{\hat Z} c_1(K_{\hat Z})\wedge\omega_{\hat Z}^{n-1}<0.
$$ 
Since ${\hat Z}$ and $X$ are smooth and bimeromorphic
manifolds, we can apply now Theorem \ref{bi-invariance} 
to conclude that $X$ admits a balanced metric $\omega^{n-1}$ 
such that 
$$
\int_X c_1(K_X)\wedge\omega^{n-1}<0.
$$ 
\end{proof}

\medskip

We give next a short second proof of Theorem \ref{thmA-proj} 
using the results presented in Section \ref{cones}. In fact, for 
projective manifolds we can prove a slightly more precise result:

\medskip

\begin{prop}
\label{projective}
Let $X$ be a uniruled projective manifold of dimension $n$. 
Then there exists $\omega^{n-1}$ a balanced metric, 
$[\omega^{n-1}]\in H^{n-1,n-1}_{BC,NS}(X,{\mathbb R})$ such that 
\begin{equation}
\int_X c_1(K_X)\wedge\omega^{n-1}<0 
\end{equation}
\end{prop}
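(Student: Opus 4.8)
The plan is to use the duality between the nef and pseudoeffective cones at the N\'eron–Severi level (Proposition \ref{neronseveri}) together with the fundamental fact from the minimal model program that a uniruled projective manifold has non-pseudoeffective canonical class. More precisely, since $X$ is uniruled and projective, $K_X$ is not pseudoeffective, so $\{c_1(K_X)\}\notin {\mathcal E}^1_{A,NS}(X)$ (using that on projective manifolds the Bott–Chern and Aeppli N\'eron–Severi pictures agree and the pseudoeffective cone there is the classical one, by \eqref{2.12}). Equivalently, working in $H^{1,1}_{BC,NS}(X,\RR)$, the class $[c_1(K_X)]$ does not lie in ${\mathcal E}^1_{BC,NS}(X)$.

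\textbf{The separation step.} Because ${\mathcal E}^1_{BC,NS}(X)$ is a closed convex cone in the finite-dimensional space $H^{1,1}_{BC,NS}(X,\RR)$, and the pairing \eqref{pairing} is nondegenerate, the Hahn–Banach separation theorem produces a class $\xi\in H^{n-1,n-1}_{A,NS}(X,\RR)$ lying in the dual cone $({\mathcal E}^1_{BC,NS})^*$ with $\langle c_1(K_X),\xi\rangle<0$. By Theorem \ref{duality} ii), read at the N\'eron–Severi level as asserted in Proposition \ref{neronseveri}, this dual cone is exactly ${\mathcal N}^{n-1}_{A,NS}(X)$. So there is a nef Aeppli class $\xi\in{\mathcal N}^{n-1}_{A,NS}(X)$ with $\int_X c_1(K_X)\wedge\xi<0$.

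\textbf{From nef Aeppli class to balanced metric.} The remaining task is to replace the nef class $\xi$ by an actual balanced metric class in $H^{n-1,n-1}_{BC,NS}(X,\RR)$, still pairing negatively with $c_1(K_X)$. First I would push $\xi$ back to a Bott–Chern class using \eqref{2.11}: $j_{n-1}({\mathcal N}^{n-1}_{BC,NS})={\mathcal N}^{n-1}_{A,NS}$, so there is $[\eta]\in{\mathcal N}^{n-1}_{BC,NS}(X)$ with $j_{n-1}[\eta]=\xi$ and hence $\int_X c_1(K_X)\wedge\eta<0$. Since ${\mathcal N}^{n-1}_{BC,NS}(X)=\overline{{\mathcal B}_X}\cap H^{n-1,n-1}_{BC,NS}(X,\RR)$, the class $[\eta]$ is a limit of balanced classes; because $X$ is projective it is in particular balanced, so the balanced cone ${\mathcal B}_X$ is nonempty and open in $H^{n-1,n-1}_{BC}(X,\RR)$. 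Fixing a K\"ahler metric $\omega_0$ on $X$ (which gives $[\omega_0^{n-1}]\in{\mathcal B}_X\cap H^{n-1,n-1}_{BC,NS}$), the class $[\eta]+\delta[\omega_0^{n-1}]$ is then an honest balanced class in the N\'eron–Severi group for every small $\delta>0$, and for $\delta$ small enough
\[
\int_X c_1(K_X)\wedge(\eta+\delta\,\omega_0^{n-1})<0.
\]
Choosing a balanced representative $\omega^{n-1}$ of this class finishes the proof.

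\textbf{Main obstacle.} The delicate point is making sure all the cone identities of Theorem \ref{duality} are legitimately available at the N\'eron–Severi level and in the \emph{projective} (not merely K\"ahler) setting — in particular \eqref{2.11}, whose proof hinges on \eqref{2.12} and thus on \cite[Theorem 2.2]{boucksom} and the total-transform results of Alessandrini–Bassanelli. Once one grants Proposition \ref{neronseveri} in full, the rest is a routine Hahn–Banach separation plus a perturbation by a K\"ahler class; the substance of the argument is entirely in translating non-pseudoeffectivity of $K_X$ through these dualities.
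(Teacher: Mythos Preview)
Your proposal is correct and follows essentially the same strategy as the paper: invoke \cite{boucksom} to get $[c_1(K_X)]\notin{\mathcal E}^1_{BC,NS}$, then use the duality statements of Theorem~\ref{duality} at the N\'eron--Severi level (Proposition~\ref{neronseveri}) together with Hahn--Banach separation to produce a class in $\overline{{\mathcal B}_X}\cap H^{n-1,n-1}_{BC,NS}$ pairing negatively with $c_1(K_X)$, and perturb. The only cosmetic difference is the order in which you traverse the cone identities: you dualize first via ii) and then lift through \eqref{2.11}, whereas the paper passes to ${\mathcal E}^1_{A,NS}$ via \eqref{2.12} and then dualizes via iv); since \eqref{2.11} and \eqref{2.12} are dual to each other this is the same argument, and your version has the virtue of making the separation and perturbation steps explicit (the paper compresses them into one sentence). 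One small remark: to ensure $[\omega_0^{n-1}]$ lands in the N\'eron--Severi group you should take $\omega_0$ to be a Hodge metric, which exists since $X$ is projective.
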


\begin{proof} Since $X$ is uniruled, arguing as in the first proof of Theorem 
\ref{thmA-proj}, we see that the canonical bundle 
$K_X$ is not pseudoeffective. Hence, as in Remark \ref{psefbcdra},  
$c_1(K_X)\notin {\mathcal E}^1_{BC,NS}.$ As a consequence, from  
Proposition \ref{neronseveri} we see $c_1(K_X)\notin {\mathcal E}^1_{A,NS}.$ 
Theorem \ref{duality} now implies the existence of a balanced metric 
$\omega^{n-1}$ with integral class whose pairing with $c_1(K_X)$ is 
negative.
\end{proof}

The proof of Theorem \ref{thmA-proj} now follows from Proposition 
\ref{projective} and Theorem \ref{bi-invariance}.

\subsubsection{The K\"ahler case} 

The first proof of Therem \ref{thmA-proj} can be adapted 
in K\"ahler setting.\footnote{Here we have to work with 
singular K\"ahler spaces. For the basic notions in 
the theory of a K\"ahler space we refer the interested reader 
to the sections 2 and 3 in \cite{hor-pet2}.}

\medskip

\begin{prop}
\label{hp-kmfs}
Let $\phi:Z\ra S$ be a Mori fiber space where $Z$ and $S$ are 
K\"ahler spaces. Then there exists a K\"ahler form $\eta$ on $Z$ 
such that 
$$
K_Z\cdot[\eta^{n-1}]<0.
$$
\end{prop}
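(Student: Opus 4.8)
The plan is to mimic the projective argument in Proposition \ref{metric-mfs}, working throughout with K\"ahler classes on (possibly singular) K\"ahler spaces and replacing "ample line bundle" by "K\"ahler class." First I would dispose of the case $\dim S = 0$: then the relative Picard number condition in the definition of a Mori fiber space says $-K_Z$ is $\phi$-ample, hence (since $S$ is a point) $-K_Z$ is represented by a K\"ahler form, and $K_Z\cdot[\eta^{n-1}]<0$ for any K\"ahler form $\eta$ on $Z$. For the main case $\dim S = b>0$, fix a K\"ahler form $\omega_S$ on $S$ and a K\"ahler form $\eta_0$ on $Z$, and set $\eta_m = m\,\phi^*\omega_S + \eta_0$; this is a K\"ahler form on $Z$ for every $m>0$. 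Expanding the intersection number,
$$
K_Z\cdot[\eta_m^{n-1}] = c(n,b)\, m^b \big(K_Z\cdot[\phi^*\omega_S]^b\cdot[\eta_0]^{n-1-b}\big) + O(m^{b-1}),
$$
where $c(n,b)$ is a positive combinatorial constant. The leading coefficient computes, by the projection formula, to $c(n,b)\,\big(\int_S\omega_S^b\big)\,\big(K_F\cdot[\eta_0|_F]^{n-1-b}\big)$, where $F$ is a general fiber; since $-K_F$ is $\phi$-ample and $F$ is projective (general fibers of a Mori fiber space are Fano), this last factor is strictly negative, so $K_Z\cdot[\eta_m^{n-1}]<0$ for $m\gg 0$, and we take $\eta=\eta_m$.

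The technical points that need care, rather than genuine obstacles, are the following. One must make sense of the intersection product $K_Z\cdot[\eta^{n-1}]$ on a singular $\QQ$-factorial log-terminal K\"ahler space: here $mK_Z$ is Cartier for some $m>0$, so $c_1(K_Z)$ defines a Bott-Chern/de Rham class, and pairing against the closed positive $(n-1,n-1)$-current $\eta^{n-1}$ (or its pushforward from a resolution) is well-defined; alternatively one passes to a resolution $f:\hat Z\to Z$, writes $K_{\hat Z}\equiv f^*K_Z+\sum a_iE_i$ with $a_i>-1$, and does the computation upstairs exactly as in the first proof of Theorem \ref{thmA-proj}, checking that the exceptional contributions are absorbed into lower-order terms in $m$. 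One also needs that pullback of a K\"ahler form under $\phi$ is a smooth closed semipositive $(1,1)$-form so that $m\phi^*\omega_S+\eta_0$ is genuinely K\"ahler — this is immediate since $\phi$ is a holomorphic map of K\"ahler spaces. Finally, the fact that the general fiber $F$ is projective with $-K_F$ ample is precisely condition (ii) in the definition of Mori fiber space together with the fact that fibers of Fano-type contractions are Fano.

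The step I expect to require the most attention is the intersection-theoretic bookkeeping on the singular space: asymptotically in $m$, one wants the leading term $m^b K_Z\cdot[\phi^*\omega_S]^b\cdot[\eta_0]^{n-1-b}$ to be computed correctly via the projection formula $\phi_*\big([\phi^*\omega_S]^b\cdot(\,\cdot\,)\big) = \big(\int_S\omega_S^b\big)\,\phi_*(\,\cdot\,)$ and to identify what survives as an integral over a general fiber. On a smooth projective variety this is classical; in the K\"ahler singular setting one should either invoke the cohomological intersection theory on K\"ahler spaces from \cite{hor-pet2} or reduce to a resolution and track that the discrepancy divisors $E_i$, being $f$-exceptional, do not affect the top power of $\phi^*\omega_S$. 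Once the leading coefficient is identified with a negative multiple of $K_F\cdot[\eta_0|_F]^{n-1-b}$, the conclusion $K_Z\cdot[\eta_m^{n-1}]<0$ for $m$ large is automatic.
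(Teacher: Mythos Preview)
Your proposal is correct and follows exactly the paper's approach: the paper's proof simply fixes K\"ahler forms $\omega_S$ on $S$ and $\omega_Z$ on $Z$, sets $\eta_t=t\phi^*\omega_S+\omega_Z$, and refers back to the asymptotic computation in Proposition~\ref{metric-mfs}, omitting all further details. Your write-up is in fact considerably more explicit than the paper's, carefully spelling out the leading-term identification via the projection formula and flagging the intersection-theoretic issues on singular K\"ahler spaces that the paper leaves implicit.
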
 

\begin{proof} 
Fix $\omega_S$  and $\omega_Z$ K\"ahler forms 
on $S$ and $Z,$ respectively and consider the 
family of K\"ahler forms  
$$
\eta_t=t\phi^*\omega_S+\omega_Z, ~t>0.
$$
As in the proof of Proposition \ref{metric-mfs} we see that 
$K_Z\cdot [\eta_t^{n-1}]<0$ for $t\gg0.$ We omit 
the details. 
\end{proof}

\begin{proof}[Proof of Theorem \ref{thmA}] Let $X$ be a smooth, 
uniruled,  $3-$dimensional manifold of class $\cal C.$ 
That means there exists a  uniruled,  $3-$dimensional, K\"ahler  
manifold $Y$ bimeromorphic to $X.$

According to H\"oring and Peternell \cite[Theorem 1.1]{hor-pet2}, 
$Y$ is bimeromorphic to a K\"ahler Mori fiber space $Z$ as 
in Proposition \ref{hp-kmfs}. In general, $Z$ is not smooth. 
Let $f:\hat Z\ra Z$ be a desingularization. By \cite[1.3.1]{varouchas},  
$\hat Z$ is a smooth K\"ahler manifold. As in the first proof of 
Theorem \ref{thmA-proj}, we can find a K\"ahler metric $\omega$ on $\hat Z$ 
such that 
$$
\int_Yc_1(K_{\hat Z})\wedge \omega^2<0.
$$ 
Applying now Theorem \ref{bi-invariance}, we can 
conclude that $X$ admits a balanced 
metric with the property claimed in Theorem 
\ref{thmA}.
\end{proof}

\subsection{Characterization of uniruledness}
\label{uniruledness}

In this very short section, we complete the characterization of uniruledness, by 
proving a converse to Theorems \ref{thmA-proj} and \ref{thmA}.

 \begin{proof}[Proof of Theorem \ref{characterization}] 
 The implication $ii)\Longrightarrow iii)$ is the content of Theorems  
 \ref{thmA-proj} and \ref{thmA}, while $iii)\Longrightarrow iv)$ is 
 trivial. Morever, from the positivity criterion of Lamari 
 \cite[Th\'eor\`eme 1.2 (1)]{lamari1} (see also Theorem \ref{duality}, part i)) 
we can see that  $iv)\Longrightarrow i)$.

Finally, it remains to show that $i)\Longrightarrow ii)$. Since neither 
uniruledness nor the pseudoeffectivity of the canonical divisor is affected 
by bimeromorphic transformations, we may assume that either $X$ is 
projective, or  $X$  is a non-projective K\"ahler threefold. In the first 
case, the remarkable Corollary 0.3 in \cite{boucksom} shows that $X$ is 
uniruled, while in the second case we reach the same conclusion using 
the equally remarkable Corollary 1.2 in \cite{brunella}.
 \end{proof}

\section{Balanced metrics on twistor spaces}
\label{sec-twist}

A large class of examples of uniruled complex manifolds is provided 
by the manifolds bimeromorphic to the twistor spaces of closed anti-self 
dual four-manifolds. These are compact complex manifolds of dimension 
three \cite{ahs}, equipped with a one-parameter family of balanced 
metrics  \cite{michelsohn, muskarov}.
In this section, we show that among these metrics there exists a  
balanced metric of positive total Chern  scalar curvature.  

\medskip

We start by recalling the construction of the twistors spaces.

\bigskip

Let $(M,g)$ be an oriented Riemannian $4-$manifold. 
Under the action of the Hodge $\star-$operator 
$$
\star:\Lambda^2M\to \Lambda^2 M,
$$  
one has a decomposition 
$ \displaystyle 
\Lambda^2M=\Lambda_+\oplus \Lambda_-
$ 
into self-dual and anti-self-dual forms, 
corresponding to the $(\pm 1)-$ eigenvalues of $\star.$

Let $\cal R: \Lambda^2\to\Lambda^2$ be the Riemannian
curvature operator. Under the action of $SO(4),$ 
the Riemannian curvature operator 
decomposes as 
$$
\cal R=\frac{s}6Id+W^-+W^++\stackrel{\circ}{r},
$$ 
where $s$ denotes the scalar curvature, $W^{\pm}$ 
are the self-dual and anti-self-dual components of the 
Weyl curvature operator, and $\stackrel{\circ}{r}$ 
is the trace-free Ricci curvature operator.  The oriented Riemannian 
$4-$manifold $(M, g)$ is said to be anti-self-dual (ASD) 
if $W^+=0.$ This definition is conformally invariant, i.e. 
if $g$ is ASD, so is $ag$  for any smooth positive 
function $a.$

The twistor space of a conformal Riemannian manifold 
$(M,[g])$ is the total space of the sphere 
bundle of the rank three real vector bundle of self-dual 
$2-$forms ${\cal Z}:=S(\Lambda_+).$ 
Let $\varpi:{\cal Z}\to M$ be the projection onto $M.$ For every 
$x\in M,$ the fiber $\varpi^{-1}(x)$ corresponds to the set of 
$g-$orthogonal complex structures compatible with the given 
orientation. More precisely, any such $j$ defines the unit 
length self-dual form 
$$
\omega_{\cal j}(v,w)=\frac1{\sqrt 2}g(v,jw).
$$

The real six-dimensional manifold $\cal Z$ comes equipped with 
an almost complex structure, that is an endomorphism 
${\cal J}:T{\cal Z}\to T{\cal Z}$ satisfying ${\cal J}^2=-1.$  
The Levi-Civita connection $\nabla$ of $M$ gives rise to a splitting
$T{\cal Z} ={\cal H}\oplus {\cal V}$ of the tangent bundle of ${\cal Z}$ 
into horizontal and vertical components. At a point $(\sigma, x)\in \cal Z,$ 
the vertical distribution $\cal V$ consists of the vectors tangent to 
the fiber of $\varpi,$ which is an oriented metric $2-$sphere, 
and hence equipped with a  compatible complex structure $I.$ 
On the other hand, the almost  complex structure $j$ associated 
to $\sigma$ discussed above naturally lifts to the horizontal distribution 
$\cal H.$ Then, $\cal J$ is defined as $\cal J=(j,I).$ A remarkable result 
of Atiyah, Hitchin and Singer \cite{ahs} asserts that $\cal J$ is integrable 
if and only if the metric $g$ is ASD. In such a case, the fibers 
$\varpi^{-1}(x), x\in M$ are smooth rational curves, and so $\cal Z$ 
is uniruled.

\bigskip

We assume from now on that $\mathcal Z$ is the twistor space 
associated to a closed, oriented $4$-manifold $M$ equipped with an ASD 
conformal class $[g]$. We fix $g\in[g]$.

Let $h_t$ be the family of Riemannian metrics on $\cal Z$ 
defined by 
\begin{equation}
\label{bmetrics}
h_t=\varpi^*g+tg^{\text{vert}},
\end{equation}
where $t>0,$ $g$ is the metric of $M$ and $g^{\text{vert}}$ 
is the restriction of the metric induced on $\Lambda_+$ 
to the vertical distribution $\cal V.$ Then 
$\varpi:(\cal Z, h_t)\to (M,g)$ is a Riemannian submersion 
with totally geodesic fibers. Moreover, the metrics $h_t$ 
are compatible with $\cal J.$ Michelsohn states 
\cite[Section 6]{michelsohn} the existence of balanced metrics on 
$\mathcal Z.$ A proof that the metrics $h_t$ are in fact balanced 
follows from Corollary 3.5 and Lemma 4.1 in \cite{muskarov}.

The Riemannian scalar curvature of the metrics $h_t$ 
is computed by  Davidov and Mu\v skarov \cite{dm}. 
More precisely, in \cite[Corollary 4.2]{dm} it is proved 
that for every $(\sigma, x)\in \cal Z,$  
$$
s_{\cal Z}(\sigma, x)= 
s_M(x)+\frac t4(\|\cal R(\sigma)\|^2-\|\cal R_-\|_x^2)+\frac 2t,
$$
where $s_{\cal Z}$ and $s_M$ denote the scalar 
curvatures of $\cal Z$ and $M,$ respectively, and 
$\cal R_-=\frac{s}{12}Id+W^-+\stackrel{\circ}{r}$ 
is the restriction of $\cal R$ to $\Lambda_-.$
In particular, for $0<t\ll 1,$ we see that the metric $h_t$ 
satisfies $s_{\cal Z}>0.$

\begin{proof}[Proof of Theorem \ref{twistors}] Let $X$ 
be a complex manifold bimeromorphic to a twistor 
space $\cal Z.$

Let $\omega_t$ be the K\"ahler 2-form of the 
balanced metric $h_t$ on $\cal Z$ defined by (\ref{bmetrics}).
By Corollary \ref{ineq}, we have 
$$
\int_{\cal Z} c_1({\cal Z})\wedge \omega_t^2\geq 
\frac 1{12\pi}\int_{\cal Z}s_{\cal Z}\omega_t^3>0,
$$
for $0<t\ll 1.$
The conclusion of Theorem \ref{twistors} now 
follows from Theorem \ref{bi-invariance}.
\end{proof}

\begin{rmk}
\label{ex-twistors} 
Twistor spaces of class $\cal C$ are rather scarce.
Campana \cite{campana}, and LeBrun and Poon \cite{lp}, 
independently proved that if the twistor space  $\cal Z$ of an 
ASD four-manifold $M$ is of Fujiki class $\cal C,$ then $\cal Z$ 
is Moishezon and $M$ is homeomorphic to either $S^4$ 
or the connected sum of $n\geq 1$ copies of $\cpb,$ the 
complex projective plane endowed with the opposite 
orientation. However, a result of Taubes \cite{taubes} 
asserts that every Riemannian manifold $M$ can be 
equipped with an ASD metric after taking the connected 
sum with sufficiently many copies of $\cpb$, hence the 
twistor spaces provide a large family of balanced manifolds 
which are not of class $\cal C.$ 
\end{rmk}

\subsection*{Acknowledgements} The first author was supported by the 
CNCS grant PN-II-ID-PCE-2011-3-0269 during the preparation of this work. 
The second author acknowledges the support of the Simons Foundation's 
"Collaboration Grant for Mathematicians", while the third author was supported 
by the NSF grant DMS-1309029. The second and third author would like to 
thank IH\'ES for hospitality, while this project was finalized. The authors 
are grateful to the anonymous referees  for many useful comments
which helped us improve this article.

 \providecommand{\bysame}{\leavevmode\hbox
to3em{\hrulefill}\thinspace}


\begin{thebibliography}{Dyn52b}


\bibitem[AKMW]{wft}
{D. Abramovich, K. Karu, K. Matsuki, J. W\l odarczyk}, 
{\em Torification and factorization of birational maps.} 
J. Amer. Math. Soc. {\bf 15} (2002), no. 3, 531--572.





\bibitem[AB1]{alessandrini3}
{ L. Alessandrini, G. Bassanelli,} 
{\em Metric properties of manifolds bimeromorphic to compact K\"ahler spaces.} 
J. Diff. Geom. {\bf 37} (1993) 95--121.




\bibitem[AB2]{alessandrini2} 
{L. Alessandrini, G. Bassanelli}, 
{\em Modifications of compact balanced manifolds.} 
C.R. Acad. Sci. Paris Math. {\bf 320} (1995), 1517--1522.




\bibitem[AB3]{alessandrini4} 
{L. Alessandrini, G. Bassanelli}, 
{\em The class of compact balanced manifolds is invariant under modifications.}  
Complex analysis and geometry (Trento, 1993), 1--17, 
Lecture Notes in Pure and Appl. Math., 173, Dekker, New York, 1996. 




\bibitem[AHS]{ahs}
{M.F.  Atiyah, N.J. Hitchin, I.M. Singer}, 
{\em Self-duality in four-dimensional Riemannian geometry.} 
Proc. Roy. Soc. London Ser. A {\bf 362} (1978), no. 1711, 425--461.





\bibitem[BCHM]{bchm}
{C. Birkar, P. Cascini, C. D. Hacon, J. McKernan}, 
{\em Existence of minimal models for varieties of log general type.} 
J. Amer. Math. Soc. {\bf 23} (2010), no. 2, 405--468.






\bibitem[BDPP]{boucksom}
{S. Boucksom, J.-P. Demailly, M. P\u aun, T. Peternell}, 
{\em The pseudoeffective cone of a compact K\"ahler manifold 
and varieties of negative Kodaira dimension.} 
J. Algebraic Geom. {\bf 22} (2013), 201--248.





\bibitem[Br]{brunella}
{M. Brunella}, 
{\em A positivity property for foliations on compact K\"ahler manifolds.}
Intern. J. Math. {\bf 17} (2006), 35--43.





\bibitem[Ca2]{campana}
{F.  Campana}, 
{\em The class ${\cal C}$ is not stable by small deformations.} 
Math. Ann. {\bf 290} (1991), no. 1, 19--30.






\bibitem[DM]{dm}
{J. Davidov, O. Mu\v skarov,} 
{\em On the Riemannian curvature of a twistor space.} 
Acta Math. Hungar. {\bf 58} (1991), no. 3-4, 319--332.





\bibitem[DP]{dempau} 
{J.-P. Demailly, M. P\u aun}, 
{\em Numerical characterization of the K\" ahler cone of a compact K\"ahler manifold.}
Ann. of Math. (2) {\bf 159} (2004), no. 3, 1247--1274.




\bibitem[De]{de-reg} 
{J.-P. Demailly,} 
{\em Regularization of closed positive currents and intersection theory.} 
J. Algebraic Geom. {\bf 1} (1992), no. 3, 361--409.




\bibitem[DPS]{dps}
{ J.-P. Demailly, T. Peternell, M. Schneider}, 
{\em Holomorphic line bundles with partially vanishing cohomology.} 
Proceedings of the Hirzebruch 65 Conference on Algebraic Geometry 
(Ramat Gan, 1993), 165--198, Israel Math. Conf. Proc., {\bf 9}, 
Bar-Ilan Univ., Ramat Gan, 1996.



\bibitem[Fr]{friedman}
{R. Friedman}, 
{\em Simultaneous resolution of threefold double points.} 
Math. Ann., {\bf 274} (4) (1986), 671--689. 



\bibitem[FLY]{fuliyau} 
{J. Fu, J. Li, S.-T. Yau}, 
{\em Balanced metrics on non-K\"ahler Calabi-Yau threefolds.}
J. Diff. Geom. {\bf 90} (2012), 81--130.






\bibitem[FX]{fx}
{J. Fu, J. Xiao}, 
{\em Relations between the K\"ahler cone and the balanced cone of a K\"ahler manifold.}
 arXiv:1203.2978v1 [math.DG]



\bibitem[Ga1]{g-metrics}
{P. Gauduchon}, 
{\em Le th\'eor\`eme de l'excentricit\'e nulle.}
C.R Acad. Sci. Paris S\'er. A-B 2{\bf 285} (1977), no.5, A387--A390.




\bibitem[Ga2]{g-torsion}
{P. Gauduchon},
{\em La 1-forme de torsion d'une vari\'et\'e hermitienne compacte.} 
Math. Ann. {\bf 267} (1984), no. 4, 495--518. 




\bibitem[HW]{heier-wong}
{G. Heier, B. Wong},
{\em Scalar curvature and uniruledness on projective manifolds.} 
Comm. Anal. Geom. {\bf 20} (2012), no. 4, 751--764.




\bibitem[Hi]{hit-tw}
{N. J. Hitchin}, 
{\em K\"ahlerian twistor spaces.} 
Proc. London Math. Soc. (3) {\bf 43} (1981), 133--150.





\bibitem[HP1]{hor-pet1}
{A. H\"oring, T. Peternell}
{\em Minimal models for K\"ahler thereefolds.} 
arXiv:1304.4013v1 [math.AG]





\bibitem[HP2]{hor-pet2}
{A. H\"oring, T. Peternell},
{\em Mori fibre spaces for K\"ahler thereefolds.} 
arXiv:1310.5837v1 [math.AG]



\bibitem[KS]{ks}
{K. Kodaira, D. C. Spencer}, 
{\em On deformations of complex analytic structures, III. Stability theorems for complex structures},
Ann. of Math. (2), {\bf 71}, no. 1 (1960), 43--76.




\bibitem[Ko]{kollar}
{J. Koll\'ar}, 
{\em Rational curves on algebraic varieties.} 
Ergebnisse der Mathematik und ihrer Grenzgebiete. 3. Folge. 
A Series of Modern Surveys in Mathematics [Results in Mathematics and Related Areas. 
3rd Series. A Series of Modern Surveys in Mathematics], {\bf 32}. Springer-Verlag, Berlin, 1996.



\bibitem[KM]{km}
{J. Koll\' ar, S. Mori},
{\em Birational geometry of algebraic varieties.}
Cambridge Tracts in Mathematics,  Vol. 134, Cambridge University Press, 
Cambridge, 1998.





\bibitem[La1]{lamari1}
{A. Lamari}, 
{\em Le c\^one k\"ahl\'erien d'une surface}, 
J. Math. Pures Appl., {\bf 78} (1999), 249--263.





\bibitem[La2]{lamari}
{A. Lamari}, 
{\em Courants k\"ahl\'eriens et surfaces compactes.} 
Ann. Inst. Fourier, {\bf 49} no. 1 (1999), 263--285.





\bibitem[LP]{lp}
{C. LeBrun, Y.S Poon}, 
{\em Twistors, KŠhler manifolds, and bimeromorphic geometry. II.} 
J. Amer. Math. Soc. {\bf 5} (1992), no. 2, 317--325.



\bibitem[Le]{lelong}
{P. Lelong}, 
{Int\'egration sur un ensemble analytique complexe.} 
Bull. Soc. Math. France {\bf 85} (1957), 239--262.




\bibitem[LY]{liu-yang}
{K. Liu, X. Yang}, 
{\em Ricci Curvatures on Hermitian manifolds.}
arXiv:1404.2481v2 [math.DG].



\bibitem[LT]{lu-tian}
{P. Lu,  G. Tian}, 
{\em The complex structures on connected sums of $S^3\times S^3$.} 
Manifolds and geometry (Pisa, 1993), 284--293, Sympos. Math., XXXVI, 
Cambridge Univ. Press, Cambridge, 1996.
 
 
 
 
\bibitem[Mi]{michelsohn}
{M. L. Michelsohn}, 
{\em On the existence of special metrics in complex geometry.} 
Acta Math., {\bf 149}, Number 1 (1982), 261--295.





\bibitem[Mu]{muskarov}
{O. Mu\c skarov}, 
{\em Almost Hermitian structures on twistor spaces and their types.} 
Atti del Seminario Matematico e Fisico dell'Universit\'a di Modena {\bf 37}, no. 2 (1989). 285--297.




\bibitem[R\u a]{ratcon}
{R. R\u asdeaconu},
{\em On rationally connected threefolds.} 
http://www.math.vanderbilt.edu/ $\sim$rasdear/weakratcon.pdf.





\bibitem[Sc]{schweitzer} 
{M. Schweitzer}, 
{\em Autour de la cohomologie de Bott-Chern.} 
arXiv:0709.3528 [math.AG].





\bibitem[Ta]{taubes}
{C.H.  Taubes},  
{\em The existence of anti-self-dual conformal structures.} 
J. Differential Geom. {\bf 36} (1992), no. 1, 163--253.






\bibitem[To]{toma}
{M. Toma},
{\em A note on the cone of mobile curves.}
C. R. Math. Acad. Sci. Paris {\bf 348} (2010), 71--73.





\bibitem[Va]{varouchas}
{J. Varouchas},
{\em K\"ahler spaces and proper open morphisms.} 
Math. Ann. {\bf 283} (1989), no. 1, 13--52. 




\bibitem[Ya]{yau} 
{S. T. Yau}, {\em On the curvature of compact Hermitian manifolds.}
 Invent. Math. {\bf 25} (1974), pp. 213--239.




\end{thebibliography}
\end{document}